\theoremstyle{plain}
\newtheorem{thm}[subsubsection]{Theorem}
\newtheorem{prop}[subsubsection]{Proposition}
\newtheorem{lem}[subsubsection]{Lemma}
\theoremstyle{remark}
\newtheorem{rem}[subsubsection]{Remark}
\newtheorem{exam}[subsubsection]{Example}
\newtheorem*{prob}{Problem}
\title[]{Estimates for Weierstrass division in ultradifferentiable classes}
\author{Vincent Thilliez}
\address{Laboratoire Paul Painlev\'e, Math\'ematiques - B\^atiment M2\\
Universit\'e Lille 1\\
F-59655 Villeneuve d'Ascq Cedex, France}
\email{thilliez@math.univ-lille1.fr}
\subjclass[2010]{32B05, 26E10, 32B20, 46E10}
\begin{document}

\begin{abstract} 
We study the Weierstrass division theorem for function germs in strongly non-quas\-ian\-alytic Denjoy-Carleman classes $\mathcal{C}_M$. For suitable divisors $P(x,t)=x^d+a_1(t)x^{d-1}+\cdots+a_d(t)$ with real-analytic coefficients $a_j$, we show that the quotient and the remainder can be chosen of class $\mathcal{C}_{M^\sigma}$, where $M^\sigma=((M_j)^\sigma)_{j\geq 0}$ and $\sigma$ is a certain {\L}ojasiewicz exponent related to the geometry of the roots of $P$ and verifying $1\leq \sigma\leq d$. We provide various examples for which $\sigma$ is optimal, in particular strictly less than $d$, which sharpens earlier results of Bronshtein and of Chaumat-Chollet. 
\end{abstract}

\maketitle

\section*{Introduction} 
Consider a polynomial $P$ of degree $d$ given by 
\begin{equation*}
P(x,t)=x^d+a_1(t)x^{d-1}+\cdots+a_d(t)
\end{equation*}
where $a_1,\ldots,a_d$ are real-analytic function germs at the origin in $\mathbb{R}^m$ such that $a_j(0)=0$ for $j=1,\ldots,d$. The classic Weierstrass division theorem states that for any real-analytic function germ $f$ at the origin in $ \mathbb{R}\times\mathbb{R}^m $, we have a unique division formula 
\begin{equation*}
f(x,t)=P(x,t)q(x,t)+ \sum_{j=0}^{d-1}r_j(t)x^j,
\end{equation*}
where $q$ and $r_0,\ldots,r_{d-1}$ are real-analytic function germs at the origin in $ \mathbb{R}\times\mathbb{R}^m $ and $\mathbb{R}^m $, respectively. The famous Malgrange-Mather division theorem \cite{Ma, Mat, Tou} consists in a similar statement in the $\mathcal{C}^\infty$ setting: if $f$ and $a_1,\ldots,a_d$ are $\mathcal{C}^\infty$ function germs, the division formula still holds with $\mathcal{C}^\infty$ function germs $q$ and $r_0,\ldots,r_{d-1}$, which are no longer unique. 

In view of these two fundamental results, it is natural to ask what kind of division results can be obtained for classes of functions which are ``between analytic and $\mathcal{C}^\infty$'', namely Denjoy-Carleman classes $\mathcal{C}_M$ associated with a sufficiently regular sequence $M$ of real numbers (in particular, the stability of $\mathcal{C}_M$ classes under derivation will always be implied). If $f$ and the $a_1,\ldots,a_d$ are of class $\mathcal{C}_M$, is it possible to achieve Weierstrass division with $q$ and  $r_0,\ldots,r_{d-1}$ in the same class, or maybe in some larger class $\mathcal{C}_N$ depending on $M$ and on the polynomial $P$? In the absence of extra assumptions, it quickly turns out that, in general, one cannot expect that the quotient and the remainder will be in the same class as $f$, even if the coefficients $a_1,\ldots,a_d$ are real-analytic: see, for instance, Proposition 2 in Section 3 of \cite{Th2}. 

When the class $\mathcal{C}_M$ is quasianalytic, it was proved by Childress \cite{Chi} that in order to have a division property without loss of regularity, that is, with the quotient and the remainder in the same class $\mathcal{C}_M$ as $f$, it is necessary that $P$ be hyperbolic (this condition means that for each value of the parameter $t$, all the roots of $P(\cdot,t)$ are real). Interestingly, it turns out that hyperbolicity is also a sufficient condition for division without loss of regularity, as shown later by Chaumat-Chollet \cite{CC2}.

In the present paper, instead of quasianalytic classes, we shall concentrate on the case of so-called \emph{strongly regular} (in particular, non-quasianalytic) classes, a typical example of which is provided by Gevrey classes $\mathcal{G}^{1+\alpha}$ associated with $M_j=(j!)^\alpha$ for some real $\alpha>0$. A first important result in this direction is due to Bronshtein \cite{Bro}, who proved that for $\mathcal{G}^{1+\alpha}$ data, the quotient and the remainder can be chosen of class $\mathcal{G}^{1+d\alpha}$. This  was later considerably extended by Chaumat-Chollet \cite{CC1}, who showed in particular that if $f$ and $a_1,\ldots,a_d$ belong to a strongly regular class $\mathcal{C}_M$, the quotient and remainder can be chosen of class $\mathcal{C}_N$  with $ N_j= M_{dj}$, or equivalently, $N_j= (M_j)^d$. This result appears in \cite{CC1} as a corollary of a Weierstrass division theorem for generic polynomials $ \Pi(x,\lambda)=x^d+\lambda_1 x^{d-1}+\cdots+\lambda_d $, using the fact that $ P(x,t)=\Pi(x, a_1(t),\ldots,a_d(t))$. Since the exponent $d$ in $(M_j)^d$ is actually the best possible for the generic polynomial of degree $d$, this approach is unlikely to provide better estimates taking into account particular geometric or algebraic features of a given Weierstrass polynomial $P$. 

In the case of a hyperbolic polynomial $P$, using a combination of more direct proofs of Weierstrass division and of specific information on the regularity of the roots of $P$, it is in fact possible to obtain $N_j=M_j$: this was proved by Bronshtein \cite{Bro} for Gevrey classes, and by Chaumat-Chollet in the general case, as the article \cite{CC2} actually encompasses both quasianalytic and non-quasianalytic situations. 

This suggests that it should be also possible to improve the general estimate $N_j=(M_j)^d$ for suitable non-hyperbolic polynomials $P$, in terms of algebraic or geometric features of $P$. In this article, we shall indeed obtain such an improvement, provided $P$ is real-analytic and the locus $\Gamma$ of its complex roots has a sufficiently well-behaved geometry, in a sense described precisely in Section \ref{geometry}. Although this is only a partial answer, it covers a number of standard examples. We shall get $ N_j=(M_j)^\sigma$ where $\sigma$ is a rational number which depends on $P$ and satisfies $1\leq \sigma \leq d$. This number 
is introduced as a {\L}ojasiewicz exponent associated with metric properties of $\Gamma$ and of the sets $\mathcal{N}_z$ of complex parameters $\tau$ such that $P(z,\tau)=0$. For instance, for $m=1$ and $P(x,t)=x^d-t^2$, we have $\sigma=d/2$ and our result therefore shows that if the function germ $f$ is of Gevrey class $\mathcal{G}^{1+\alpha}$, the quotient and the remainder for the Weierstrass division of $f$ by $P$ can be chosen with $\mathcal{G}^{1+\frac{d}{2}\alpha}$ regularity, instead of the cruder $\mathcal{G}^{1+d\alpha}$ estimate given by previously known results. We shall also see that the improved estimates are actually optimal: for certain germs $f$, the quotient and remainder may not be chosen in any class $\mathcal{C}_N$ smaller than $\mathcal{C}_{M^\sigma}$. 

The paper is organized as follows. In Section \ref{DCclasses}, we recall the necessary definitions and fundamental facts pertaining to Denjoy-Carleman classes. In Section \ref{geometry}, we gather all the geometric material that will be used, introducing in particular the assumptions on $P$, and the aforementioned {\L}ojasiewicz exponent $\sigma$. Several examples are provided to illustrate the assumptions and explicit computations of $\sigma$. Section \ref{dbarflatext} is devoted to results on $\bar\partial$-flat extensions of functions: these results are a technical tool in our proof of the division formula with improved estimates, which is eventually carried out, and discussed, in Section \ref{divest}.  
 
\section{Denjoy-Carleman classes}\label{DCclasses}

\subsection{Notation} For any multi-index $J=(j_1,\ldots,j_n)$ of $\mathbb{N}^n$, we always denote the length $j_1+\cdots+j_n$ of $J$ by the corresponding lower case letter $j$. We put $J!=j_1!\cdots j_n!$, $D_x^J=\partial^j/\partial x_1^{j_1}\cdots\partial x_n^{j_n}$ and $x^J=x_1^{j_1}\cdots x_n^{j_n}$. We denote by $\vert \cdot\vert$ the euclidean norm on $\mathbb{R}^n$; distances in $\mathbb{R}^n$ are considered with respect to that norm. For any real $r>0$, we put $I_r= [-r,r]$ 
and $D_r=\{z\in\mathbb{C}: \vert z\vert\leq r\}$. The integer part of a real number $s$ will be denoted by $\lfloor s\rfloor$. 

\subsection{Some properties of sequences}\label{sequences}
Let $ M=(M_j)_{j\geq 0} $ be a sequence of real numbers satisfying the following assumptions:
\begin{equation}\label{norm}
\text{ the sequence } M \text{ is increasing, with } M_0=1,
\end{equation}
\begin{equation}\label{logc}
\text{ the sequence } M \text{ is \emph{logarithmically convex}}.
\end{equation}
Property \eqref{logc} amounts to saying that $ M_{j+1}/M_j $ is increasing. Together with \eqref{norm}, it implies
\begin{equation*}
M_jM_k\leq M_{j+k}\ \textrm{ for any } (j,k)\in\mathbb{N}^2. 
\end{equation*}
We say that the \emph{moderate growth} property holds if there is a constant $ A>0$ such that, conversely, 
\begin{equation}\label{modg}
M_{j+k}\leq A^{j+k} M_jM_k\ \textrm{ for any } (j,k)\in\mathbb{N}^2. 
\end{equation}
We say that $M $ satisfies the \emph{strong non-quasianalyticity} condition if there is a constant $A>0 $ such that
\begin{equation}\label{snqa}
\sum_{j\geq k}\frac{M_j}{(j+1)M_{j+1}}\leq A \frac{M_k}{M_{k+1}} \textrm{ for any } k\in\mathbb{N}.
\end{equation}
Property \eqref{snqa} is indeed stronger than the classical Denjoy-Carl\-eman non-quas\-ian\-alyt\-icity condition
\begin{equation*}
\sum_{j\geq 0}\frac{M_j}{(j+1)M_{j+1}}<\infty.
\end{equation*}
The sequence $M$ is said to be \emph{strongly regular} if it satisfies \eqref{norm}, \eqref{logc}, \eqref{modg} and \eqref{snqa}.  

\begin{exam}\label{exgev}
Let $ \alpha $ and $\beta$ be real numbers, with $ \alpha> 0 $. The sequence $M$ defined by $ M_j=(j!)^\alpha(\ln(j+e))^{\beta j} $ is strongly regular. This is the case, in particular, for Gevrey sequences $ M_j=(j!)^\alpha $.
\end{exam}

With every sequence $ M $ satisfying \eqref{norm} and \eqref{logc} we also associate the function $ h_M $ defined by $ h_M(t)=\inf_{j\geq 0}t^jM_j $ for any real $ t>0 $, and $ h_M(0)=0 $. From \eqref{norm} and \eqref{logc}, it is easy to see that the function $h_M$ is continuous, increasing, and it satisfies $ h_M(t)>0$ for $t>0$ and $ h_M(t)=1 $ for $t\geq 1/M_1 $. It also fully determines the sequence $M$, since we have 
\begin{equation}\label{Legendre}
 M_j=\sup_{t>0}t^{-j}h_M(t). 
\end{equation}

\begin{exam} Let $M$ be as in Example \ref{exgev}, and set $\eta(t)=\exp(-(t\vert\ln t\vert^\beta)^{-1/\alpha})$ for $t>0$. Elementary computations show that there are constants $a>0$, $b>0$ such that $\eta(at)\leq h_M(t)\leq \eta(bt)$ for $t\to 0$. 
\end{exam} 

The moderate growth assumption \eqref{modg} implies that for any real $s\geq 1$, there is a constant $ \kappa_s\geq 1 $, depending only on $ M $, such that
\begin{equation}\label{hfunct2}
h_M(t)\leq \big(h_M(\kappa_s t)\big)^s\text{ for any }t\geq 0.
\end{equation}
We refer to \cite{CC} for a proof. Using \eqref{modg}, \eqref{Legendre} and \eqref{hfunct2}, it is not difficult to see that there are constants $A_1>0$ and $A_2>0$, depending only on $M$ and $s$, such that
\begin{equation}\label{equivseq}
A_1^{j+1} (M_j)^s\leq M_{\lfloor sj\rfloor}\leq A_2^{j+1}(M_j)^s\text{ for any }j\in\mathbb{N}.
\end{equation}
Besides, if $M$ is strongly regular, the sequence $M^s=((M_j)^s)_{j\geq 0}$ is also strongly regular.  

\subsection{Denjoy-Carleman classes}
Let $\Omega$ be an open subset of $\mathbb{R}^n $, and let $M$ be a sequence of real numbers satisfying \eqref{norm} and \eqref{logc}. We say that a function $f$ is of class $ \mathcal{C}_M$ in $\Omega$ if $ f$ belongs to $ \mathcal{C}^\infty(\Omega) $ and, for any compact subset $ K $ of $ \Omega $, one can find a constant $C\geq 0 $, such that the estimate
\begin{equation}
\vert D_x^Jf(x)\vert \leq C^{j+1} j!M_j
\end{equation}
holds for any multi-index $ J\in \mathbb{N}^n$ and any $x\in K $. 

The \emph{Denjoy-Carleman class} associated with $M$ in $\Omega$ is the space, denoted by $\mathcal{C}_M(\Omega)$, of all functions of class $\mathcal{C}_M$ in $\Omega$. A function germ at the origin in $\mathbb{R}^n$ is said to be of class $\mathcal{C}_M$ if it has a representative in $\mathcal{C}_M(\Omega)$ for some neighborhood $\Omega$ of $0$. 
Conditions \eqref{norm} and \eqref{logc} imply that $\mathcal{C}_M(\Omega)$ is an algebra, and that the set of germs of class $\mathcal{C}_M$ at the origin is a local ring. They also imply that $\mathcal{C}_M$ regularity is stable under composition with $\mathcal{C}_M$ maps, see for instance \cite{Rou}. The moderate growth property \eqref{modg} implies the weaker condition
\begin{equation*}
M_{j+1}\leq A^{j+1}M_j\ \textrm{ for any } j\in\mathbb{N}, 
\end{equation*}
which characterizes the stability of $\mathcal{C}_M(\Omega)$ under derivation. For a more detailed overview of the relationship between conditions on $M$ and properties of the corresponding Denjoy-Carleman class, we refer the reader to \cite{Th2}. 

From now on, we will assume that the sequence $M$ is strongly regular. In particular, $\mathcal{C}_M$ regularity is stable under derivation as well as composition with analytic maps, and non-quasianalyticity implies the existence of truncation functions in $\mathcal{C}_M(\Omega)$.  

\section{Roots of polynomials and geometric properties}\label{geometry}
\subsection{Geometric objects associated with a Weierstrass polynomial}\label{objects}
We consider a polynomial 
\begin{equation*}
P(x,t)=x^d+a_1(t)x^{d-1}+\cdots+a_d(t)
\end{equation*} 
where the coefficients $ a_1,\ldots,a_d$ are germs of real analytic functions at the origin in $\mathbb{R}^m $, and $ a_j(0)=0$ for $ j=1,\ldots, d$. We assume that at least one of the coefficients is non identically zero (in the trivial case $ P(x,t)=x^d $, Weierstrass division boils down to a direct application of the Taylor formula with integral remainder). 

By natural complexification, each $a_j$ can be regarded as an holomorphic function germ in a neighborhood of $0$ in $\mathbb{C}^m$. We then have the following lemma, which is a variation on basic results about Weierstrass polynomials (see e.g. \cite{Ran}, Lemma I.3.2). 

\begin{lem}\label{Rouche} 
For each sufficiently small $\eta>0$, one can find $ \delta>0 $ such that for any $z\in D_\delta$, there exists $ \tau \in D_\eta^m$ for which $ P(z,\tau)=0$.  
\end{lem}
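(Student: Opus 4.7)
The case $z=0$ is immediate: $\tau=0$ gives $P(0,0)=a_d(0)=0$. For $z$ small but non-zero, my plan is to apply Rouch\'e's theorem along a well-chosen complex line through $0$ in the parameter space $\mathbb{C}^m$, which reduces the problem to a classical one-variable situation.

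First I would perform a preliminary reduction. Let $j^\ast=\max\{j\in\{1,\dots,d\}:a_j\not\equiv 0\}$, which exists by hypothesis. Since $a_{j^\ast+1}\equiv\cdots\equiv a_d\equiv 0$, one can factor $P(x,\tau)=x^{d-j^\ast}Q(x,\tau)$ with
$$Q(x,\tau)=x^{j^\ast}+a_1(\tau)x^{j^\ast-1}+\cdots+a_{j^\ast}(\tau).$$
For $z\neq 0$, solving $P(z,\tau)=0$ is then equivalent to solving $Q(z,\tau)=0$, and $Q$ has the advantage of carrying a non-trivial ``constant term'' $a_{j^\ast}\not\equiv 0$ in its $x$-expansion. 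Next, I would choose a complex direction $v\in\mathbb{C}^m$, normalized so that $\|v\|_\infty=1$, such that $s\mapsto a_{j^\ast}(sv)$ is not identically zero; this exists because the lowest-order homogeneous component of $a_{j^\ast}$ is a non-zero polynomial on $\mathbb{C}^m$, whose zero locus misses generic directions. Along such a line one has an expansion $a_{j^\ast}(sv)=cs^\mu+O(s^{\mu+1})$ with $c\neq 0$ and $\mu\geq 1$.

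Finally, I would run Rouch\'e on a circle $|s|=r$ with $r\leq \eta$ chosen small enough so that $|a_{j^\ast}(sv)|\geq \tfrac{|c|}{2}r^\mu$ on that circle. Since $a_j(0)=0$, a bound of the form $|a_j(sv)|\leq C_j |s|$ is available on $|s|\leq r$; combined with $|z|\leq 1$ this gives
$$|Q(z,sv)-a_{j^\ast}(sv)|\leq |z|^{j^\ast}+\sum_{j=1}^{j^\ast-1}|a_j(sv)||z|^{j^\ast-j}\leq C|z|$$
uniformly on $|s|\leq r$, with a constant $C$ depending only on $r$ and the $a_j$. Choosing $\delta\leq 1$ small enough so that $C\delta<\tfrac{|c|}{2}r^\mu$, any $z\in D_\delta$ makes the perturbation strictly smaller than $|a_{j^\ast}(sv)|$ on $|s|=r$, so Rouch\'e's theorem yields a zero $s_0$ of $s\mapsto Q(z,sv)$ with $|s_0|<r$. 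Then $\tau=s_0 v$ lies in $D_r^m\subset D_\eta^m$ and satisfies $P(z,\tau)=0$.

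No step looks like a genuine obstacle; the only points requiring care are the existence of a direction $v$ keeping $a_{j^\ast}(sv)$ non-trivial, and enforcing the Rouch\'e estimate uniformly in $z$. It is worth noting that the proof naturally produces $\delta$ of the order of $\eta^\mu$ for some integer $\mu\geq 1$ tied to the vanishing of $a_{j^\ast}$, a polynomial dependence which is entirely consistent with the {\L}ojasiewicz-type geometry of $\Gamma$ introduced later in the paper.
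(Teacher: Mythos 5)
Your argument is correct and is essentially the paper's: both reduce $P$ to $Q$ by peeling off the factor $z^{d-k}$ (your $j^\ast$ is the paper's $k$), then pick a complex direction in $\tau$-space where the leading nonzero coefficient $a_{j^\ast}$ is not identically zero, and apply Rouch\'e in that one-variable slice. The only cosmetic difference is that the paper performs a linear change of variables $\mathcal{L}$ and lets the remaining $m-1$ parameters range over a small polydisc, whereas you restrict to a single complex line $s\mapsto sv$ and make the Rouch\'e estimate fully explicit, which is a slightly leaner presentation of the same idea.
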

\begin{proof} Let $k$ be the greatest integer in $\{1,\ldots,d\}$ such that $a_k$ is not identically zero. For $z\in \mathbb{C}$ and $\tau $ in a neighborhood of $0$ in $\mathbb{C}^m$, we have $ P(z,\tau)=z^{d-k}Q(z,\tau)$ with $ Q(z,\tau)=z^k+a_1(\tau)z^{k-1}+\cdots+a_k(\tau) $. Since $ Q(0,\tau)=a_k(\tau)$ with $a_k\not\equiv 0$, there is an integer $p\geq 1$ and a linear change of variables $\mathcal{L} :\mathbb{C}^m\to \mathbb{C}^m$ such that 
$Q(0,\mathcal{L}(0,\ldots,0,w_m))= c(w_m)w_m^p $ with $ c(0)\neq 0$. Given a sufficiently small $\varepsilon>0$,  Rouch\'e's theorem then yields $\delta>0$ such that for $ z\in D_\delta $ and $(w_1,\ldots,w_{m-1})\in D_\delta^{m-1}$, the function $ w_m\mapsto Q(z,\mathcal{L}(w)) $ has $p$ roots in $D_\varepsilon$. We can obviously assume $\delta\leq\varepsilon$, and choose $\varepsilon $ such that $ \mathcal{L}(D_\delta^{m-1}\times D_\varepsilon)\subset D_\eta^m$. Therefore, we see that, for $\vert z\vert \leq \delta$, the function $\tau\mapsto P(z,\tau)$ has zeros in $D_\eta^m$, hence the result. 
\end{proof}

In what follows, $\eta$ and $\delta$ are chosen as in the statement of Lemma \ref{Rouche} above. For any $z\in D_\delta$, we define
\begin{equation*}
\mathcal{N}_z=\{\tau\in D_\eta^m : P(z,\tau)=0\}. 
\end{equation*}
Roughly speaking, $\mathcal{N}_z$ is the set of all sufficiently small complex parameters $\tau$ for which $z$ is a root of $ P(\cdot,\tau)$. Lemma \ref{Rouche} ensures that $\mathcal{N}_z$ is a non-empty closed subset of $\mathbb{C}^m$. We can therefore define
\begin{equation*}
\rho(z)=d(\mathcal{N}_z,\mathbb{R}^m).
\end{equation*}
Notice that the inclusion $\mathcal{N}_z\subset D_\eta^m$ easily implies
\begin{equation}\label{restrict}
 \rho(z)=d(\mathcal{N}_z,I_\eta^m). 
\end{equation}
We also consider the compact subset of $\mathbb{C}$ defined by  
\begin{equation*}
\Gamma= \{z\in \mathbb{C} : \exists t\in I_\eta^m : P(z,t)=0\}.
\end{equation*}
In simple terms, $\Gamma$ is the locus of all complex roots of $P(\cdot,t)$ when the parameter $t$ varies in a suitable neighborhood of $0$ in $\mathbb{R}^m$.   
 
\subsection{A metric property of the roots}
We start with preparatory lemmas about subanalyticity properties of distance functions. They require only basic properties of subanalytic geometry, for which we refer the reader to \cite{DS}. 
  
\begin{lem}\label{sub1}
The function $ z\mapsto d(z,\Gamma) $ is subanalytic in $\mathbb{C}$. 
\end{lem}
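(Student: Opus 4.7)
The plan is to reduce to the standard fact that the distance function to a compact subanalytic subset of $\mathbb{R}^n$ is itself subanalytic. It will then suffice to establish that $\Gamma$ is a compact subanalytic subset of $\mathbb{C} \simeq \mathbb{R}^2$.

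I would first verify compactness. Since $P(\cdot,t)$ is monic of fixed degree $d$ and its coefficients $a_j(t)$ are continuous on the compact set $I_\eta^m$, the roots of $P(\cdot,t)$ stay uniformly bounded in $\mathbb{C}$ as $t$ varies in $I_\eta^m$. Hence the graph of the roots
\[
\widetilde\Gamma = \{(z,t) \in \mathbb{C}\times I_\eta^m : P(z,t)=0\}
\]
is a closed and bounded, hence compact, subset of $\mathbb{C}\times \mathbb{R}^m$, and $\Gamma$ is simply its image under the continuous projection to $\mathbb{C}$.

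For subanalyticity, I would write $P=P_1+iP_2$, where $P_1,P_2$ are real-analytic in $(\operatorname{Re} z,\operatorname{Im} z,t_1,\ldots,t_m)$ on an open set $\mathbb{C}\times U$, with $U$ a neighborhood of $I_\eta^m$ on which the $a_j$ admit holomorphic extensions. Then $\widetilde\Gamma$ is cut out of $\mathbb{C}\times U$ by the real-analytic equations $P_1=P_2=0$ together with the affine inequalities defining $I_\eta^m$, so it is a relatively compact semi-analytic subset of $\mathbb{C}\times U$. Its projection $\Gamma$ to $\mathbb{C}$ is therefore subanalytic by the very definition of subanalytic sets.

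Finally, I would conclude using the compactness of $\Gamma$ to obtain subanalyticity of $d(\cdot,\Gamma)$. Concretely, the epigraph
\[
\{(z,r)\in\mathbb{C}\times\mathbb{R} : r\geq 0, \ \exists w\in\Gamma, \ |z-w|^2\leq r^2\}
\]
is the image of the subanalytic set $\{(z,r,w) : w\in\Gamma,\, r\geq 0,\, |z-w|^2\leq r^2\}$ under the projection $(z,r,w)\mapsto(z,r)$, which is proper because $\Gamma$ is compact; hence the epigraph is subanalytic, and the graph of $d(\cdot,\Gamma)$ follows by an elementary Boolean operation. I do not anticipate any serious obstacle: the real content is the compactness of $\Gamma$, which turns the infimum definition of the distance into a proper projection amenable to subanalytic quantifier elimination.
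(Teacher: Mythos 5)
Your proposal is correct and follows essentially the same route as the paper: realize $\Gamma$ as the projection of the bounded semianalytic set $\{(z,t)\in\mathbb{C}\times I_\eta^m : P(z,t)=0\}$, hence subanalytic, and then invoke the subanalyticity of the distance function to a (compact) subanalytic set. The only difference is that the paper simply cites Proposition 3.5 of Denkowska--Stasica for this last step, while you spell out the standard proper-projection argument for the epigraph; both are fine.
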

\begin{proof}
The set $Z=\{(z,t)\in \mathbb{C}\times I_\eta^m : P(z,t)=0\} $ is a bounded semianalytic subset of $\mathbb{C}\times\mathbb{R}^m $. The set $\Gamma$ is therefore subanalytic in $\mathbb{C}$ as the projection of $Z$ on the first factor, and the lemma follows (see e.g. Proposition 3.5 of \cite{DS}). 
\end{proof} 

\begin{lem}\label{sub2}
The function $ \rho $ is subanalytic in $ D_\delta$.
\end{lem}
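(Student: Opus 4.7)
The plan is to realize $\rho$ as the minimum of a polynomial function along the fibers of a compact semianalytic set over $D_\delta$, and then to extract subanalyticity of its graph from the standard closure properties of subanalytic sets in a relatively compact setting.

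First, using \eqref{restrict} together with the compactness of $\mathcal{N}_z$ and $I_\eta^m$, I would rewrite
\[ \rho(z)^2 = \min\{|\tau-t|^2 : \tau \in \mathcal{N}_z,\ t \in I_\eta^m\}. \]
The set $W = \{(z,\tau,t) \in D_\delta \times D_\eta^m \times I_\eta^m : P(z,\tau) = 0\}$ is compact semianalytic, because $P(z,\tau)=0$ amounts to two real-analytic equations ($\mathrm{Re}\,P = \mathrm{Im}\,P = 0$) on the real variables underlying $(z,\tau)$. The map $\Phi\colon (z,\tau,t) \mapsto (z,|\tau-t|^2)$ is polynomial and proper on $W$, so by the projection theorem $E := \Phi(W)$ is a relatively compact subanalytic subset of $D_\delta \times \mathbb{R}$. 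For each $z \in D_\delta$ the fiber $E_z$ is non-empty (by Lemma~\ref{Rouche}) and compact, and by construction $\rho(z)^2 = \min E_z$.

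To conclude, I would show that the graph $G = \{(z,r) : r = \min E_z\}$ is subanalytic. One can write
\[ G = E \setminus \pi\bigl(\{(z,r,r') \in E \times \mathbb{R} : 0 \leq r' < r\}\bigr), \]
with $\pi(z,r,r') = (z,r)$. The inner set is relatively compact semianalytic, its projection is relatively compact subanalytic, and $G$ is then the difference of two relatively compact subanalytic sets, hence subanalytic. Thus $\rho^2$, and consequently $\rho$, is subanalytic on $D_\delta$. The only delicate point is this last complementation, since subanalyticity is preserved under complements only in the relatively compact setting; here $\rho$ is manifestly bounded on $D_\delta$, so the argument can be carried out inside a fixed compact region of $D_\delta \times \mathbb{R}$. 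A cleaner alternative would be to invoke directly the general principle that the infimum of a continuous subanalytic function along the compact fibers of a subanalytic map is itself subanalytic, which is precisely what the above set-theoretic manipulation encodes.
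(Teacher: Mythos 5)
Your strategy matches the paper's: express the graph of $\rho$ (or here $\rho^2$) as a set-theoretic difference of projections of bounded semianalytic sets, working in a relatively compact region where complementation preserves subanalyticity. The paper does this directly with $\rho$ via the sublevel sets $X=\{(z,s): d(\mathcal{N}_z,\mathbb{R}^m)\le s\}$ and $Y=\{(z,s): d(\mathcal{N}_z,\mathbb{R}^m)< s\}$; your detour through $\rho^2$ is harmless.

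There is, however, an error in your formula for $G$. In $\{(z,r,r')\in E\times\mathbb{R}: 0\le r'<r\}$, the variable $r'$ is not constrained to lie in the fiber $E_z$, so with $\pi(z,r,r')=(z,r)$ the projection is simply $\{(z,r)\in E: r>0\}$ and $E\setminus\pi(\cdot)$ equals $E\cap\{r=0\}$, not the graph of $\min E_z$. What you need is
\[
G = E \setminus \pi\bigl(\{(z,r,r') : (z,r)\in E,\ (z,r')\in E,\ 0\le r'<r\}\bigr),\qquad \pi(z,r,r')=(z,r),
\]
so that $r'$ is forced into $E_z$. The inner set is then relatively compact and \emph{subanalytic} (not semianalytic, as you wrote, since $E$ is only subanalytic), being the intersection of two analytic pullbacks of $E$ with a semianalytic condition; its projection is subanalytic, and the difference is subanalytic in the relatively compact setting. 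With this correction, and granting the routine passage from $\rho^2$ to $\rho$, your argument is sound; the general principle you invoke at the end is indeed the standard way to package it.
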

\begin{proof} The argument is a slight variation on the proof of Proposition 3.5 of \cite{DS}. Remark that the inclusion $\mathcal{N}_z\subset D_\eta^m$ implies $d(\mathcal{N}_z,\mathbb{R}^m)\leq \eta'$ with $\eta'=\sqrt{m}\eta$. The graph of the function thus 
can be described as $X\setminus Y$, where $X= \{(z,s)\in D_\delta\times I_{\eta'} : d(\mathcal{N}_z,\mathbb{R}^m)\leq s\}$ and $Y=\{(z,s)\in D_\delta\times I_{\eta'} : d(\mathcal{N}_z,\mathbb{R}^m)< s\}$. For $z\in D_\delta$, taking into account \eqref{restrict} and the closedness of $\mathcal{N}_z$, we see that the condition $d(\mathcal{N}_z,\mathbb{R}^m)\leq s$ holds if and only if one can find $t\in I_\eta^m$ and $\tau\in D_\eta^m$ such that $P(z,\tau)=0$ and $\vert t-\tau \vert\leq s$. Put  
$\widetilde{X}= \{(z,s,t,\tau)\in D_\delta\times I_{\eta'}\times I_\eta^m\times D_\eta^m : P(z,\tau)=0  \textrm{ and }\vert t-\tau\vert\leq s\}$. The set $\widetilde{X}$ is a bounded semianalytic subset of $\mathbb{C}\times\mathbb{R}\times\mathbb{R}^m\times\mathbb{C}^m$ and $X$ is therefore subanalytic since it is a projection of $\widetilde{X}$. Analogously, $Y$ is subanalytic, as well as $X\setminus Y$, which proves the result. 
\end{proof}

We are now able to introduce the {\L}ojasiewicz exponent $\sigma$ that will play a crucial role in our division estimates. Let $ \mathcal{S} $ be the set of real numbers $s\geq 1$ for which there exists a constant $c>0$ such that 
\begin{equation}\label{ineqloj}
\rho(z)\geq c\,d(z,\Gamma)^s\quad \textrm{for any } z \in D_\delta. 
\end{equation}

\begin{prop} The set $\mathcal{S}$ is non-empty and the number $ \sigma= \inf\mathcal{S} $ belongs to $\mathcal{S}$, in other words there is a constant $c>0$ such that property \eqref{ineqloj} holds with $s=\sigma $. Moreover, the exponent $\sigma$ is rational and satisfies $ 1\leq\sigma\leq d $.
\end{prop}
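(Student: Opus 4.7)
\medskip

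The plan is to establish the four assertions in the order: upper bound $\sigma\le d$ (which will simultaneously give non-emptiness of $\mathcal{S}$), lower bound $\sigma\ge 1$ (trivial), and finally attainment of the infimum together with rationality (which will rest on the subanalytic nature of $\rho$ and $d(\cdot,\Gamma)$).

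First, for the upper bound I would directly verify that $d\in\mathcal{S}$. Fix $z_0\in D_\delta$ and pick $\tau\in\mathcal{N}_{z_0}$ realizing $|\tau-t|=\rho(z_0)$ for some $t\in I_\eta^m$; such $t$ exists thanks to \eqref{restrict}. The holomorphic function $(z,\tau)\mapsto P(z,\tau)$ has uniformly bounded derivatives on the compact polydisc $D_\delta\times D_\eta^m$, so
\begin{equation*}
|P(z_0,t)|=|P(z_0,t)-P(z_0,\tau)|\leq C_0\,|t-\tau|=C_0\,\rho(z_0).
\end{equation*}
On the other hand, since $t$ is real, the factorization $P(z_0,t)=\prod_{j=1}^d(z_0-x_j(t))$ involves roots $x_j(t)\in\Gamma$, so $|z_0-x_j(t)|\ge d(z_0,\Gamma)$ for every $j$ and hence $|P(z_0,t)|\ge d(z_0,\Gamma)^d$. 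Combining these two estimates gives $\rho(z_0)\ge C_0^{-1}\,d(z_0,\Gamma)^d$, proving $d\in\mathcal{S}$. In particular $\mathcal{S}$ is non-empty and $\sigma\le d$. The bound $\sigma\ge 1$ is immediate from the definition of $\mathcal{S}$.

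It remains to show that $\sigma\in\mathcal{S}$ and that $\sigma$ is rational. The functions $\rho$ and $z\mapsto d(z,\Gamma)$ are continuous and subanalytic on the compact set $\overline{D_\delta}$ by Lemmas \ref{sub1} and \ref{sub2}, and they share the same zero set, namely $\Gamma$. I would then invoke the standard sharp form of the {\L}ojasiewicz inequality for subanalytic functions (see e.g. Bierstone-Milman, or \cite{DS}): for continuous subanalytic functions $f,g$ on a compact subanalytic set with $\{g=0\}\subset\{f=0\}$, the optimal exponent
\begin{equation*}
\sigma^*=\inf\{s>0:\exists c>0,\ |g(z)|^s\le c\,|f(z)|\}
\end{equation*}
is a positive rational number and is itself attained. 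Applied to $f=\rho$ and $g=d(\cdot,\Gamma)$, and coupled with the constraint $s\ge 1$ of the definition (with which our $\sigma^*$ agrees since $d\in\mathcal{S}$ forces $\sigma^*\le d$, and trivial enlargement of any admissible exponent allows us to restrict the infimum to $s\ge 1$), this gives $\sigma=\sigma^*\in\mathcal{S}\cap\mathbb{Q}$.

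The main obstacle is really the attainment and rationality in the last step: neither is formal, and both rely on nontrivial structural results about subanalytic functions (essentially reducing, via the preparation theorem or the curve selection lemma, the behaviour of $\rho/d(\cdot,\Gamma)^s$ near $\Gamma$ to Puiseux-type expansions along subanalytic arcs). By contrast, the upper bound $\sigma\le d$ is essentially elementary once one exploits the factorization of $P$ by its roots, and the lower bound $\sigma\ge 1$ is a tautology. If one wished to avoid quoting the sharp subanalytic {\L}ojasiewicz theorem, a self-contained argument would proceed by the curve selection lemma: assuming $\sigma\notin\mathcal{S}$, one would extract a subanalytic arc $\gamma$ in $D_\delta\setminus\Gamma$ ending on $\Gamma$ along which $\rho(\gamma(\cdot))/d(\gamma(\cdot),\Gamma)^\sigma\to 0$, then use Puiseux expansions of $\rho\circ\gamma$ and $d(\gamma,\Gamma)$ as subanalytic functions of one real parameter to read off that the exponent governing their ratio is rational, contradicting the infimum.
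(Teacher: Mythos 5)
Your proposal follows the same strategy as the paper: subanalyticity of $\rho$ and $d(\cdot,\Gamma)$ from Lemmas \ref{sub1}--\ref{sub2}, the sharp {\L}ojasiewicz inequality for subanalytic functions to get attainment and rationality, and the elementary estimate combining the factorization $|P(z,t)|\geq d(z,\Gamma)^d$ with the Lipschitz bound $|P(z,t)-P(z,\tau)|\leq C|t-\tau|$ to prove $d\in\mathcal{S}$. The organization of the upper-bound step (choosing $\tau$ and $t$ achieving $\rho(z_0)$ simultaneously, versus fixing $t$ and minimizing afterward) differs only cosmetically from the paper.

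One small inaccuracy: when quoting the {\L}ojasiewicz theorem you wrote the hypothesis as $\{g=0\}\subset\{f=0\}$, but the inequality $|g|^s\leq c|f|$ requires the reverse inclusion $\{f=0\}\subset\{g=0\}$ (if $f$ vanished somewhere $g$ did not, the inequality would be impossible). In the present setting this is harmless because, as you note, the two zero sets are both equal to $\Gamma\cap D_\delta$ --- the paper records only the needed inclusion $\{\rho=0\}\subset\Gamma$ --- but the general statement should be corrected.
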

 \begin{proof}  
 Both functions $ z\mapsto d(z,\Gamma)$ and $\rho$ are subanalytic on $D_\delta$ by Lemmas \ref{sub1} and \ref{sub2}. 
Moreover, by \eqref{restrict}, $\rho(z)=0$ implies the existence of $t\in I_\eta^m$ such that $ P(z,t)=0$, hence $ z\in \Gamma$, and $ d(z,\Gamma)=0$. The first statement of the proposition, and the fact that $\sigma$ is rational, are therefore consequences of classical results on the {\L}ojasiewicz inequality for subanalytic functions \cite{BR}. We now proceed to prove the inequality $1\leq \sigma\leq d$. For $t\in I_\eta^m$, denote by $ \mu_1(t),\ldots,\mu_d(t)$ the roots of $ P(\cdot,t)$, counted with multiplicities, so that $ P(z,t)=\prod_{j=1}^d(z-\mu_j(t))$. Each $\mu_j(t)$ belongs to $\Gamma$, hence $ \vert z-\mu_j(t)\vert\geq d(z,\Gamma) $ and 
\begin{equation}\label{minP}
\vert P(z,t)\vert\geq d(z,\Gamma)^d.
\end{equation}
By the mean value theorem, there is also a constant $C>0$ such that $ \vert P(z,t)-P(z,\tau)\vert\leq C \vert t-\tau\vert $ for any $z\in D_\delta$, $t\in I_\eta^m$ and $\tau\in D_\eta^m$. Choosing $\tau \in \mathcal{N}_z$ such that $ \vert t-\tau\vert=d(t,\mathcal{N}_z)$, we get $ \vert P(z,t)\vert\leq C d(t,\mathcal{N}_z)$. Thanks to \eqref{minP}, we derive $ d(z,\Gamma)^d\leq Cd(t,\mathcal{N}_z)$. Taking the infimum with respect to $t\in I_\eta^m$ then yields $ d(z,\Gamma)^d\leq C\rho(z)$. Thus, $d$ belongs to $\mathcal{S}$, hence $ \sigma\leq d$. We also have $\sigma\geq 1$ from the very definition of $\mathcal{S}$. 
\end{proof}

Examples of explicit computations of the exponent $\sigma$ will be given in Section \ref{examp}. Prior to this, we state a technical proposition that will be a key ingredient in the proof of the main result.  

\begin{prop}\label{Cauchy} 
Let $V$ be a bounded open neighborhood of $ D_\delta\cup \Gamma$ in $\mathbb{C}$. 
With the same notations as above, there are real numbers $ \varepsilon\in ]0,\eta[ $, $\nu\geq 1$ and $C\geq 0$ such that, for any multi-index $L\in \mathbb{N}^n$ of length $l$ and any $(z,t)\in (V\setminus \Gamma)\times  I_{\varepsilon}^m$, we have
\begin{equation}\label{estiminvP}
\left\vert D_t^L \left(\frac{1}{P(z,t)}\right)\right\vert\leq C^{l+1}l!\, d(z,\Gamma)^{-(\sigma l+\nu)}.
\end{equation}
\end{prop}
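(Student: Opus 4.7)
The plan is to derive the estimate from the multivariate Cauchy integral formula applied to the holomorphic extension $\tau \mapsto 1/P(z, \tau)$, on a polydisc around $t$ whose polyradius is of order $\rho(z)$. On such a polydisc the distance from $\tau$ to $\mathcal{N}_z$ is at least of order $\rho(z)$, so $P(z, \tau)$ stays away from zero. Cauchy's estimate then produces a factor $r^{-l} \sim \rho(z)^{-l}$, which converts into the desired $d(z, \Gamma)^{-\sigma l}$ through the Łojasiewicz inequality $\rho(z) \geq c\, d(z, \Gamma)^\sigma$ already established, while the supremum of $|1/P|$ on the polydisc will supply the extra $d(z, \Gamma)^{-\nu}$ factor.

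The main obstacle is obtaining, uniformly in $z$, a useful lower bound for $|P(z, \tau)|$ in terms of $d(\tau, \mathcal{N}_z)$. For this, I would first verify that the function $(z, \tau) \mapsto d(\tau, \mathcal{N}_z)$ is subanalytic on $\overline V \times D_\eta^m$, by a variant of the proof of Lemma \ref{sub2}: its graph is the projection of the semianalytic set $\{(z, \tau, s, \tau') : \tau' \in D_\eta^m,\ P(z, \tau') = 0,\ |\tau - \tau'| \leq s\}$. Since $(z, \tau) \mapsto |P(z, \tau)|$ is also subanalytic and both functions have the same zero set on $\overline V \times D_\eta^m$ (namely $\tau \in \mathcal{N}_z$), the Łojasiewicz inequality for subanalytic functions \cite{BR} yields constants $C_1 > 0$ and $\mu \geq 1$ with
\begin{equation*}
|P(z, \tau)| \geq C_1\, d(\tau, \mathcal{N}_z)^\mu \quad \text{for all } (z, \tau) \in \overline V \times D_\eta^m.
\end{equation*}

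With this in hand the rest is routine. Set $\varepsilon = \eta/2$, and for $z \in D_\delta \setminus \Gamma$ choose $r = \rho(z)/(4\sqrt m)$; note that $\rho(z) > 0$ since $z \notin \Gamma$, and $\rho(z) \leq \eta\sqrt m$ forces $r \leq \eta/4$, so the closed polydisc $t + \overline D_r^m$ lies in $D_\eta^m$. For $\tau$ in this polydisc, $d(\tau, \mathbb{R}^m) \leq r\sqrt m = \rho(z)/4$, which together with $d(\tau', \mathbb{R}^m) \geq \rho(z)$ for $\tau' \in \mathcal{N}_z$ gives $d(\tau, \mathcal{N}_z) \geq 3\rho(z)/4$, hence $|P(z,\tau)| \geq C_1 (3\rho(z)/4)^\mu$. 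The multivariate Cauchy bound $|D_t^L(1/P)(z,t)| \leq L!\, r^{-l} \sup |1/P|$ then combines with $L! \leq l!$ and $\rho(z) \geq c\, d(z, \Gamma)^\sigma$ to yield the stated estimate with $\nu = \sigma\mu \geq 1$. For $z \in V \setminus D_\delta$ the quantity $d(z, \Gamma)$ is bounded below since $\Gamma$ is compact, and a trivial Cauchy estimate on a polydisc of fixed polyradius $\eta/4$ suffices after enlarging $C$.
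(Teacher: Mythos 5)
Your argument for $z\in D_\delta\setminus\Gamma$ is essentially the paper's: subanalyticity of $(z,\tau)\mapsto d(\tau,\mathcal{N}_z)$ and $|P|$ on $D_\delta\times D_\eta^m$, a {\L}ojasiewicz inequality $|P(z,\tau)|\geq C_1 d(\tau,\mathcal{N}_z)^\mu$, a polydisc of polyradius comparable to $\rho(z)$ on which $d(\tau,\mathcal{N}_z)\gtrsim\rho(z)$, and the Cauchy estimate combined with $\rho(z)\geq c\,d(z,\Gamma)^\sigma$. That part is correct (the specific numerical constants $\varepsilon=\eta/2$ and $r=\rho(z)/(4\sqrt m)$ differ from the paper's $\varepsilon\leq\eta/(2m)$ and $a\rho(z)=\rho(z)/(2m)$, but both choices work for this case).

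The treatment of $z\in V\setminus D_\delta$, however, has a genuine gap. First, the assertion that $d(z,\Gamma)$ is bounded below on $V\setminus D_\delta$ is false: $\Gamma$ is determined by $\eta$ while $\delta$ is chosen much smaller, so $\Gamma$ typically extends well outside $D_\delta$ and $z\in V\setminus D_\delta$ can be arbitrarily close to $\Gamma$. (What is true, and what the proof actually needs, is that $d(\cdot,\Gamma)$ is bounded \emph{above} on the bounded set $V$, so that $d(z,\Gamma)^{-(\sigma l+\nu)}$ is bounded below by a factor that can be absorbed into $C^{l+1}$.) Second, and more seriously, the ``trivial Cauchy estimate on a polydisc of fixed polyradius $\eta/4$'' is not available: for $z$ near $\Gamma\setminus D_\delta$ the set $\mathcal{N}_z\subset D_\eta^m$ is non-empty, so $\tau\mapsto 1/P(z,\tau)$ has poles inside your polydisc $\{|\tau_j-t_j|\leq\eta/4\}$, and $\sup|1/P|$ there is not even finite, let alone uniformly bounded. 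One must first shrink $\varepsilon$ using Rouch\'e's theorem so that for $t\in I_\varepsilon^m$ all roots of $P(\cdot,t)$ lie in some $D_{\delta'}$ with $\delta'<\delta$; this makes $P$ non-vanishing on the compact set $\overline{V\setminus D_\delta}\times I_\varepsilon^m$, hence bounded away from $0$ on a small complex neighborhood, and only then does a Cauchy estimate (on a correspondingly small polydisc, not one of radius $\eta/4$) deliver $|D_t^L(1/P)|\leq C_4^{l+1}l!$. Relatedly, stating the {\L}ojasiewicz inequality on $\overline V\times D_\eta^m$ rather than on $D_\delta\times D_\eta^m$ is problematic, since $\mathcal{N}_z$ may be empty for $z$ outside $D_\delta$ and the function $d(\tau,\mathcal{N}_z)$ is then not well defined there; the inequality is needed and used only on $D_\delta\times D_\eta^m$, where Lemma \ref{Rouche} guarantees $\mathcal{N}_z\neq\emptyset$.
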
 
\begin{proof} We first prove \eqref{estiminvP} for $(z,t)\in (D_\delta\setminus \Gamma)\times I_\varepsilon^m$. Proceeding as in the proofs of Lemmas \ref{sub1} and \ref{sub2}, it is not difficult to see that the functions $ (z,\tau)\mapsto \vert P(z,\tau)\vert $ and $(z,\tau)\mapsto d(\tau,\mathcal{N}_z)$ are subanalytic in $ D_\delta\times D_\eta^m$, and that the condition $P(z,\tau)=0$ implies $ d(\tau, \mathcal{N}_z)=0$. Thus, for some suitable constants $ C_1>0$ and $\alpha>0$, we have a {\L}ojasiewicz inequality 
\begin{equation}\label{loja1}
\vert P(z,\tau)\vert\geq C_1 d(\tau,\mathcal{N}_z)^\alpha\quad \textrm{for } (z,\tau)\in D_\delta\times D_\eta^m.
\end{equation}
 Set $a=\frac{1}{2m}$. For $t\in I_{a\eta}^m$, we consider the polydisc $P_t = \{\tau \in \mathbb{C}^m: \vert \tau_j-t_j\vert\leq a\rho(z),\ j=1,\ldots,m\}$. 
For any $\tau \in P_t$, we have $\vert t-\tau\vert\leq \frac{1}{2\sqrt{m}}\rho(z)$ and $ \vert\tau\vert \leq \vert t\vert + \frac{1}{2\sqrt{m}}\rho(z)\leq a\sqrt{m}\eta+\frac{1}{2}\eta\leq \eta $, which implies $\tau\in D_\eta^m $. We also have $d(\tau,\mathcal{N}_z)\geq d(t,\mathcal{N}_z)-\vert t-\tau\vert\geq \rho(z)-\vert t-\tau\vert\geq \frac{1}{2}\rho(z) $. Together with \eqref{loja1}, these considerations imply  
\begin{equation*}
\vert P(z,\tau)\vert\geq C_2 \rho(z)^\alpha \quad\textrm{for any } (z,t)\in D_\delta\times I_{a\eta}^m\textrm{ and }\tau\in P_t,
\end{equation*}
with $C_2= 2^{-\alpha}C_1$. For $z\in D_\delta\setminus \Gamma$, property \eqref{ineqloj} implies $\rho(z)>0$ and the Cauchy formula applied on $P_t$ then yields 
\begin{equation*}
\left\vert D_t^L \left(\frac{1}{P(z,t)}\right)\right\vert\leq C_3^{l+1}l!\,\rho(z)^{-(l+\alpha)}
\end{equation*}
for some suitable $C_3>0$. In view of \eqref{ineqloj}, this proves the desired estimate with $ \nu=\sigma\alpha$ and some suitable $C$, provided we assume $\varepsilon \leq a\eta $. 
  
We now claim that inequality \eqref{estiminvP} is still valid for $(z,t)\in ((V\setminus D_\delta)\setminus\Gamma)\times I_\varepsilon^m$, up to a modification of $C$ and $\varepsilon$. Indeed, given a real $\delta'$ with $ 0<\delta'<\delta$, a standard argument based on Rouch\'e's theorem  yields $\eta'>0 $ such that for any $t\in I_{\eta'}^m $, all the roots of $P(\cdot,t)$ belong to $ D_{\delta'}$. If we assume $\varepsilon<\eta'$, it follows that $ 1/P$ is real-analytic in a neighborhood of the compact subset $ \overline{(V\setminus D_\delta)}\times I_{\varepsilon}^m$ in $\mathbb{C}\times \mathbb{R}^m$. For $(z,t)\in (V\setminus D_\delta)\times I_\varepsilon^m$, we therefore have the standard Cauchy estimate
\begin{equation*}
\left\vert D_t^L \left(\frac{1}{P(z,t)}\right)\right\vert\leq C_4^{l+1}l! 
\end{equation*}
for some suitable $C_4>0$. Since $d(\cdot,\Gamma)$ is bounded in $V$, the claim follows, and the proof is complete.  
\end{proof}

We shall now make additional geometric assumptions on $\Gamma$. These assumptions will be used to obtain certain $\bar\partial$-flat extensions in Section \ref{dbarflatext}.

\subsection{Geometric assumptions on $\Gamma$}\label{geomgamma}
Given a real number $\mu\geq 1$, we recall that two compact subsets $E_1$ and $E_2$ of $\mathbb{R}^n$ with non-empty intersection are said to be \emph{$\mu$-regularly separated} if there is a constant $C>0$ such that 
\begin{equation*}
d(x,E_1)\geq C d(x,E_1\cap E_2)^\mu \textrm{ for any } x\in E_2.
\end{equation*} 
The condition is easily seen to be symmetrical with respect to $E_1$ and $E_2$. We refer the reader to \cite{DS, Tou} for further information. In what follows, we shall be dealing with the case $\mu=1$. 

We say that a compact subset $S$ of $\mathbb{C}$ is an \emph{admissible arc} if there is a biholomorphic map $u$ in an open neighborhood of the closed unit disk in $\mathbb{C}$ such that $u(0)=0$ and $S=u([0,1])$. For example, it is easy to see that any germ of regular real-analytic arc with endpoint $0$ in $\mathbb{C}$ is represented by an admissible arc in a sufficiently small neighborhood of $0$. 

From now on, it will always be assumed that we have
\begin{equation*}
\Gamma \subset\bigcup_{j=0}^N \Gamma_j,
\end{equation*}
where the sets $\Gamma_0,\ldots,\Gamma_N$ enjoy the following properties: 
\begin{enumerate}[(i)]
\item $\Gamma_0=I_\delta\cup (\Gamma\cap\mathbb{R})$,
\item For $1\leq j\leq N$, the set $\Gamma_j$ is an admissible arc,
 \item For $0\leq j\leq N$, $0\leq k\leq N$ and $j\neq k$, we have $\Gamma_j\cap\Gamma_k=\{0\}$ and the sets $\Gamma_j$ and $\Gamma_k$ are $1$-regularly separated. 
\end{enumerate}
Examples are studied in Section \ref{examp} below.
 
\subsection{Explicit examples}\label{examp}
 
We start with a family of polynomials for which we have $\sigma=1$. 
 
\begin{exam}\label{examp1} Set $m=1$ and $ P(x,t)=x^2+t^{2p} $ for some integer $p\geq 1$. The set $\Gamma $ is the segment $ [-i\lambda_0, i\lambda_0]$ on the imaginary axis, with $\lambda_0=\eta^p$. The assumptions \ref{geomgamma} are clearly verified with $N=2$, $\Gamma_0=I_\delta$, $\Gamma_1=[-i\lambda_0,0]$ and $\Gamma_2=[0,i\lambda_0]$.

We now proceed to compute $\sigma$. For $z\in D_\delta$, put $z=re^{i\theta}$ with $ \theta\in \mathbb{R}$ and $r=\vert z\vert$. Setting $ \varphi=\frac{\theta}{p}+\frac{\pi}{2p}$, we have $\mathcal{N}_z=\{\pm r^{1/p}e^{i(\varphi+k\frac{\pi}{p})} : k=0,\ldots, p-1\}$, which implies $ \rho(z)= r^{1/p}\min_{0\leq k\leq p-1} \vert\sin(\varphi+k\frac{\pi}{p})\vert  $. 
We also have $ d(z,\Gamma)=r\vert \cos\theta\vert= r\vert\sin (p\varphi)\vert=2^{p-1}r\prod_{k=0}^{p-1}\vert \sin(\varphi+k\frac{\pi}{p})\vert \leq 2^{p-1}r \min_{0\leq k\leq p-1} \vert\sin(\varphi+k\frac{\pi}{p})\vert $. Hence, we get $ \rho(z)\geq (2r^{1/p})^{1-p} d(z,\Gamma)\geq (2\delta^{1/p})^{1-p} d(z,\Gamma) $. Thus, \eqref{ineqloj} holds for $s=1$, which implies $\sigma=1$. 
 \end{exam}

The next example provides a family of polynomials for which $\sigma$ equals half the degree.

\begin{exam}
Assume $m=1$ and $ P(x,t)=x^d-t^2 $ for some integer $d\geq 2$.  
For $\varphi\in \mathbb{R}$, denote by $\Delta_\varphi$ the segment $ \{ \lambda e^{i\varphi}: 0\leq\lambda\leq\lambda_0\}$ with $\lambda_0=\eta^{2/d}$. We then have $\Gamma =\bigcup_{k=0}^{d-1} \Delta_{\varphi_k} $ with $ \varphi_k=\frac{2k\pi}{d}$. It is therefore easy to see that $\Gamma$ satisfies the geometric assumptions \ref{geomgamma}. 

For $z\in D_\delta$, put $ z=re^{i\theta}$ with $\theta\in [0,2\pi[$ and $r=\vert z\vert$. We then have $\mathcal{N}_z=\{\pm r^{d/2}e^{id\theta/2}\} $ and $\rho(z)= r^{d/2}\vert\sin(\frac{d\theta}{2})\vert $.
Observe that, for any $k\in \{0,\ldots, d-1\} $, we have $ \vert\sin(\frac{d\theta}{2})\vert =\vert\sin(\frac{d}{2}(\theta-\varphi_k)\vert $. Moreover, it is possible to choose $k$ such that $ \vert \theta-\varphi_k\vert\leq \frac{\pi}{d} $. Hence, we get $ \vert\frac{d}{2}(\theta-\varphi_k))\vert\leq \frac{\pi}{2} $ and $ \vert\sin(\frac{d}{2}(\theta-\varphi_k))\vert\geq \frac{d}{\pi} \vert \theta-\varphi_k\vert$ thanks to the elementary inequality $ \vert \sin u\vert \geq \frac{2}{\pi}\vert u\vert $ for $\vert u\vert\leq \frac{\pi}{2}$. Thus, we obtain $ \rho(z)\geq \frac{d}{\pi} r^{d/2}\vert \theta-\varphi_k\vert $. We also have $ d(z,\Gamma)\leq d(z,\Delta_{\varphi_k})= r\vert \sin(\theta-\varphi_k)\vert\leq r \vert \theta-\varphi_k\vert$. Gathering the inequalities, we obtain $ \rho(z)\geq \frac{d}{\pi} d(z,\Gamma)^{d/2} \vert \theta-\varphi_k\vert^{1-\frac{d}{2}}\geq c d(z,\Gamma)^{d/2} $ with $c=(d/\pi)^{d/2}$. Thus, condition \eqref{ineqloj} holds for $s=d/2$. Moreover, for $z=r e^{\frac{i\pi}{d}}$ we have $ d(z,\Gamma)=r\sin(\frac{\pi}{d}) $ and $\rho(z)= r^{d/2} $. Letting $r\to 0$, we see that \eqref{ineqloj} implies $ s\geq d/2$. Therefore, we have $\sigma=d/2$. 
\end{exam}

It is natural to seek a connection between $\sigma$ and the properties of hyperbolic polynomials used in  \cite{Bro} or \cite{CC2}. The case $d=2$ in the preceding example corresponds to the very simple hyperbolic polynomial $x^2-t^2$, which has smooth roots, and for which we have obtained $\sigma=1$. We now study a less trivial hyperbolic example in higher dimensions of the parameter $t$, with non-smooth roots. Note that, in the case of hyperbolic polynomials, the geometric assumptions \ref{geomgamma} are always trivially verified (with $N=0$) since we have $ \Gamma\subset \mathbb{R}$. 
 
\begin{exam} Set $m=2$ and $P(x,t)=x^2-(t_1^2+t_2^2)$. We then clearly have $\Gamma=[-\lambda_0,\lambda_0]$ with $\lambda_0=\sqrt{2}\eta$. For $z\in D_\delta$ with $\delta>0$ small enough, we have $ d(z,\Gamma)=\vert\Im z\vert$.  

For $\tau=(\tau_1,\tau_2)\in\mathbb{C}^2$, we set $t_j=\Re \tau_j$ and $s_j=\Im \tau_j$ for $j=1,2$. Given $ \alpha\in \mathbb{R}$, set $ L_\alpha=\{\tau \in \mathbb{C}^2 : s_2=\alpha s_1\}$. We shall now prove that we have
\begin{equation}\label{intermed}
d(\tau,\mathbb{R}^2)\geq d(z,\Gamma)\textrm{ for any } \tau \in \mathcal{N}_z\cap L_\alpha.
\end{equation}
  
For $\tau\in L_\alpha$, we have $d(\tau,\mathbb{R}^2)^2= (1+\alpha^2)s_1^2$, and the point $\tau$ belongs to $\mathcal{N}_z$ if and only if it satisfies the equations 
\begin{equation}\label{extrem}
\left \{
\begin{array}{r c l}
    t_1^2+t_2^2-(1+\alpha^2)s_1^2 & = & \Re (z^2) \\
    2(t_1+\alpha t_2)s_1& = & \Im(z^2).\\
\end{array}
\right.
\end{equation}
Thus, for a given $z$, we are looking for a minorant of $f(s_1)=(1+\alpha^2)s_1^2$ on the subset $C_\alpha$ of all points $(t_1,t_2,s_1)\in \mathbb{R}^3$ satisfying \eqref{extrem}. We have to discuss two cases. 

\textit{First case:} $\Im(z^2)=0$. Either $z$ is real and the estimate $f(s_1)\geq d(z,\Gamma)^2$ is trivial since $d(z,\Gamma)=0$, or we have $z=iy$ with $y\neq 0$. Then, we get $\Re(z^2)<0$ and the first equation in \eqref{extrem} implies $ s_1\neq 0$. The second equation in \eqref{extrem} then yields $t_1=-\alpha t_2$. Thus, the first equation becomes $ (1+\alpha^2)(s_1^2-t_2^2)= y^2$, and we derive $ f(s_1)\geq y^2=d(z,\Gamma)^2$. 

\textit{Second case:} $\Im(z^2)\neq 0$. In this case, the equations \eqref{extrem} imply that $C_\alpha$ is a submanifold of $\mathbb{R}^3$ and a classic argument of Lagrange multiplier shows that if $f$ has a minimum at $(t_1,t_2,s_1)\in C_\alpha$, there are real numbers $\lambda $ and $\mu$ such that 
\begin{equation}\label{extrem1}
\left \{
\begin{array}{r c l}
    t_1\lambda+s_1\mu & = & 0 \\
    t_2\lambda+\alpha s_1\mu & = & 0\\
    -(1+\alpha)^2s_1\lambda+ (t_1+\alpha t_2)\mu & = & (1+\alpha^2)s_1.
\end{array}
\right.
\end{equation}
Note that $(\lambda,\mu)\neq (0,0)$, otherwise \eqref{extrem1} would imply $s_1=0$, in contradiction with the second equation of \eqref{extrem}. Thus, the two first equations of \eqref{extrem1} imply $ \det\left(\begin{smallmatrix} t_1 & s_1\\ t_2 & \alpha s_1\\ 
\end{smallmatrix}\right)=0 $, hence $ t_2=\alpha t_1$. By \eqref{extrem}, we now have $ (1+\alpha)^2 \tau_1^2=z^2 $, hence $ \tau_1= \pm (1+\alpha^2)^{-1/2} z $. Thus, we get $s_1=\pm (1+\alpha^2)^{-1/2} \Im z$ and $f(s_1)= (\Im z)^2=d(z,\Gamma)^2$. 

The proof of \eqref{intermed} is now complete, and we can conclude: since the roles of $s_1$ and $s_2$ are symmetric and $\alpha$ is arbitrary, \eqref{intermed} implies $d(\tau,\mathbb{R}^2)\geq d(z,\Gamma)$ for any $\tau \in \mathcal{N}_z$. Hence, we have $\rho(z)\geq d(z,\Gamma)$ and $\sigma=1$. 
\end{exam}

The preceding example suggests a natural question.  

\begin{prob}
It would be interesting to know whether the hyperbolicity of the polynomial $P$ always implies $\sigma=1$. In the light of Theorem \ref{main}, a  positive answer would indeed provide a geometric interpretation of the main result in \cite{CC2}, at least for real-analytic polynomials.
\end{prob}

We conclude this section with two examples of possible obstructions to the geometric assumptions \ref{geomgamma}.

\begin{exam} 
Set $m=2$ and $P(x,t)=x^2-2t_1x+ t_1^2+t_2^2$. For $t\in\mathbb{R}^2$, the complex roots of $P(\cdot,t)$ are $t_1+it_2$ and its conjugate. Therefore, $\Gamma$ is a neighborhood of $0$ in $\mathbb{C}$ and part (ii) of the assumptions \ref{geomgamma} fails. 
\end{exam}

\begin{exam} 
Set $m=1$ and $P(x,t)=x^2-2tx+t^2+t^4$. For $t\in\mathbb{R}$, the complex roots of $P(\cdot,t)$ are $t+it^2$ and its conjugate. Thus, in a neighborhood of $0$, the set $\Gamma$ is the union of two parabolas tangent to the real axis at the origin, and part (iii) of the assumptions \ref{geomgamma} fails. 
\end{exam}

\section{On $\bar\partial$-flat extensions}\label{dbarflatext}

\subsection{Prerequisites on Whitney jets}
Let $F=(F_J)_{J\in\mathbb{N}^n}$ be a family of continuous functions on a compact subset $E$ of $\mathbb{R}^n$. With any point $\xi\in E$ and any integer $l\geq 0$, we associate the Taylor polynomial $ T_\xi^lF(x)=\sum_{j\leq l}\frac{1}{J!}F_J(\xi)(x-\xi)^J$. The family $F$ is called a \emph{Whitney jet of class $\mathcal{C}_M$ on $E$} if there is a positive constant $C$ such that 
\begin{equation*}
\vert F_J(x)\vert\leq C^{j+1}j!M_j
\end{equation*}
for any $x\in E$ and $J\in \mathbb{N}^n$, and 
\begin{equation*}
\vert F_J(x)-D_x^J T_\xi^l F(x)\vert\leq C^{l+1}j!M_{l+1}\vert x-\xi\vert^{l+1-j}
\end{equation*}
for any $(x,\xi)\in E\times E$, any $l\in \mathbb{N}$ and any multi-index $J\in \mathbb{N}^n$ with $j\leq l$. 

Let $f$ be a function of class $\mathcal{C}_M$ in an open neighborhood of $E$. Using the Taylor formula, it is easy to see that $(D_x^Jf)_{J\in\mathbb{N}^n}$ is a Whitney jet of class $\mathcal{C}_M$ on $E$. Conversely, there is a $\mathcal{C}_M$ version of Whitney's extension theorem. 

\begin{prop}\label{WhitExt}\cite{BBMT, B, CC} 
For any strongly regular sequence $M$ and any Whitney jet $F$ of class $\mathcal{C}_M$ on $E$, there is a function $f$ of class $\mathcal{C}_M$ in $\mathbb{R}^n$ such that $F_J(x)= D_x^Jf(x)$ for any $J\in\mathbb{N}^n$ and $x\in E$.
\end{prop}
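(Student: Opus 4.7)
My plan is to adapt the classical Whitney extension construction to the Denjoy-Carleman setting, in the spirit of \cite{BBMT, B, CC}. First I would perform a Whitney cube decomposition $(Q_\nu)_\nu$ of the open complement $\Omega = \mathbb{R}^n\setminus E$, with diameters $d_\nu$ uniformly comparable to $\dist(Q_\nu, E)$, and I would select for each $\nu$ a point $\xi_\nu\in E$ realizing (up to a uniform multiplicative constant) this distance.

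Next I would build a $\mathcal{C}_M$ partition of unity $(\phi_\nu)_\nu$ subordinate to slight enlargements of the $Q_\nu$, enjoying the derivative bounds
\begin{equation*}
|D_x^J \phi_\nu(x)| \leq B^{j+1} j!\, M_j\, d_\nu^{-j}.
\end{equation*}
The existence of such $\phi_\nu$ relies on $\mathcal{C}_M$ cutoff functions with controlled derivatives, which is one of the standard consequences of the strong non-quasianalyticity condition \eqref{snqa}.

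The extension is then defined by $f(x)=F_0(x)$ for $x\in E$ and, for $x\in \Omega$,
\begin{equation*}
f(x) = \sum_\nu \phi_\nu(x)\, T_{\xi_\nu}^{l(\nu)} F(x),
\end{equation*}
where the truncation order $l(\nu)$ is tied to $d_\nu$ via the auxiliary function $h_M$: I would pick $l(\nu)$ as the (essentially unique) integer for which $d_\nu^{l(\nu)+1} M_{l(\nu)+1}$ is of order $h_M(c\, d_\nu)$ for a fixed small $c>0$. This adapted calibration, replacing the unbounded truncation order used in the smooth Whitney theorem, is the essential Denjoy-Carleman feature of the construction.

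The main obstacle will be the derivative estimates for $f$. On $\Omega$, I would differentiate under the sum, and use $\sum_\nu D_x^K \phi_\nu \equiv 0$ for any $K$ with $k\geq 1$ to replace the raw Taylor polynomials by Whitney jet remainders, thereby exploiting the flatness condition
\begin{equation*}
|F_J(x) - D_x^J T_{\xi_\nu}^{l} F(x)|\leq C^{l+1} j!\, M_{l+1}\, |x-\xi_\nu|^{l+1-j}.
\end{equation*}
Combining the latter with the estimates on $\phi_\nu$, the local finiteness of the cover, and the optimization provided by the choice of $l(\nu)$ through $h_M$, the logarithmic convexity \eqref{logc} together with the moderate growth consequence \eqref{hfunct2} should deliver $|D_x^J f(x)|\leq C'^{j+1} j!\, M_j$ uniformly on compact sets. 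Matching $D_x^J f(x) = F_J(x)$ at points of $E$ then follows from the same remainder estimates by a standard limiting argument from $\Omega$, and smoothness of $f$ across $E$ is obtained simultaneously.
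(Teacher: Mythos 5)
The paper does not prove this proposition; it cites it from \cite{BBMT, B, CC} as background material, so there is no ``paper's own proof'' to compare against. That said, your sketch follows the canonical approach from precisely those references (most closely Bruna's construction, also underlying Chaumat--Chollet's treatment), and it correctly isolates the two Denjoy-Carleman-specific ingredients that distinguish this from the classical Whitney proof: the existence of $\mathcal{C}_M$ bump functions with dilation-compatible derivative bounds $|D^J\phi_\nu|\leq B^{j+1}j!\,M_j d_\nu^{-j}$, and the calibration of the truncation order $l(\nu)$ via $h_M$ so that the residual term $d_\nu^{l(\nu)+1}M_{l(\nu)+1}$ nearly realizes $\inf_j (cd_\nu)^j M_j = h_M(cd_\nu)$, after which \eqref{hfunct2} absorbs the powers of $d(\cdot,E)^{-1}$ coming from the partition of unity.

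Two points in your write-up are a bit compressed and deserve explicit mention if you were to flesh this out. First, the partition of unity must be normalized ($\phi_\nu=\psi_\nu/\sum_\mu\psi_\mu$), and verifying that the normalized family still satisfies the stated $\mathcal{C}_M$ bounds uses the uniform bounded overlap of the Whitney cubes together with an estimate on derivatives of a reciprocal; this is routine but not free. Second, the identity $\sum_\nu D^K\phi_\nu\equiv 0$ for $|K|\geq 1$ lets you subtract a common reference polynomial $T_{\hat\xi}^{l}F$ (with $\hat\xi$ a nearest point of $E$ to $x$) from each summand before estimating, and it is only after this subtraction that the Whitney remainder bound becomes usable; you allude to this but should state it. Also, the existence of a $\mathcal{C}_M$ bump function already follows from the ordinary non-quasianalyticity, whereas the strong form \eqref{snqa} is what makes the Borel/Whitney \emph{surjectivity} work, i.e.\ it is used in the quantitative calibration step rather than in the construction of the partition of unity. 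With these clarifications, your outline is a faithful account of the standard argument.
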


We shall apply this result to a special case of $\bar\partial$-flat extension of $\mathcal{C}_M$ functions. Let $K$ be a compact subset of $\mathbb{C}$ and let $T$ be a compact subset of $\mathbb{R}^m$. A function $g: (z,t)\mapsto g(z,t)$ of class $C^\infty$ in an open neighborhood of $K\times T$ in $\mathbb{C}\times \mathbb{R}^m$ is said to be \emph{$\bar\partial$-flat with respect to $z$ on $K\times T$} if $\frac{\partial g}{\partial\bar z}$ vanishes, together with all its derivatives, on $K\times T$. In the case $K\subset\mathbb{R}$, we have the following lemma. 

\begin{lem}\label{extflat1} 
Let $K$ be a compact subset of $\mathbb{R}$. 
For any function $f : (x,t)\mapsto f(x,t)$ of class $\mathcal{C}_M$ in an open neighborhood of $K\times T $ in $\mathbb{R}\times\mathbb{R}^m$, there is a function $g : (z,t)\mapsto g(z,t)$ of class $\mathcal{C}_M$ in $\mathbb{C}\times \mathbb{R}^m$, which is $\bar\partial$-flat with respect to $z$ on $K\times T $ and such that
\begin{equation}\label{flat1}
 \frac{\partial^{j}g}{\partial z^j}(x,t)=\frac{\partial^j f}{\partial x^j}(x,t)\textrm{ for any }(x,t)\in K\times T\textrm{ and any } j\in\mathbb{N}.
\end{equation} 
\end{lem}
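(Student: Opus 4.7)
The plan is to apply the $\mathcal{C}_M$-Whitney extension theorem (Proposition \ref{WhitExt}) to a carefully chosen jet on $E = K\times\{0\}\times T$, regarded as a compact subset of $\mathbb{C}\times\mathbb{R}^m\cong\mathbb{R}^2\times\mathbb{R}^m$ via $z=x+iy$. Since any function holomorphic in $z$ at a point of $E$ satisfies $\partial/\partial y=i\,\partial/\partial z$, hence $\partial^{p+q}/\partial x^p\partial y^q = i^q\partial^{p+q}/\partial z^{p+q}$ there, the jet compatible with both requirements of the lemma is forced: I would set
\begin{equation*}
F_{p,q,L}(x,0,t) = i^q\, \frac{\partial^{p+q+|L|}f}{\partial x^{p+q}\partial t^L}(x,t), \qquad (x,t)\in K\times T.
\end{equation*}
The family $\widetilde F = (D^{(P,L)}_{x,t}f)_{(P,L)\in\mathbb{N}^{1+m}}$ is already a Whitney jet of class $\mathcal{C}_M$ on $K\times T$ by the remarks preceding Proposition \ref{WhitExt}, and this will be the source of the required estimates for $F$.

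The central step is to verify that $F$ itself is a Whitney jet of class $\mathcal{C}_M$ on $E$. The bound $|F_{p,q,L}|\leq C^{j+1}j!M_j$, with $j=p+q+|L|$, is immediate. For the Taylor remainder, the key computation uses the binomial identity $\sum_{p+q=n}\frac{1}{p!\,q!}(x-\xi)^p(iy)^q=\frac{1}{n!}(z-\xi)^n$ to collapse the Taylor polynomial of $F$ at $(\xi,0,s)\in E$ into the holomorphic Taylor polynomial of $f$ in $(z,t)$:
\begin{equation*}
T^l_{(\xi,0,s)}F(z,t) = \sum_{n+|L|\leq l}\frac{(z-\xi)^n(t-s)^L}{n!\,L!}\,\frac{\partial^{n+|L|}f}{\partial x^n\partial t^L}(\xi,s).
\end{equation*}
Differentiating by $\partial^{p+q+|L|}/\partial x^p\partial y^q\partial t^L$ produces a factor $i^q$ times $D^{(p+q,L)}T^l_{(\xi,s)}\widetilde F$; evaluating at another point $(\xi',0,s')\in E$, where $y=0$ and $z-\xi$ is real, the $i^q$ cancels with the one built into $F_{p,q,L}(\xi',0,s')$, so the Whitney remainder estimate for $F$ on $E$ reduces \emph{verbatim} to the Whitney estimate for $\widetilde F$ on $K\times T$.

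Invoking Proposition \ref{WhitExt} then produces $g\in\mathcal{C}_M(\mathbb{C}\times\mathbb{R}^m)$ realizing the jet. The two pointwise conclusions are finally checked by direct computation on $E$: using $\partial/\partial\bar z = \frac{1}{2}(\partial/\partial x + i\partial/\partial y)$,
\begin{equation*}
\frac{\partial^{p+q+|L|+1}g}{\partial x^p\partial y^q\partial t^L\partial\bar z}(x,0,t) = \tfrac{1}{2}\bigl(F_{p+1,q,L}+iF_{p,q+1,L}\bigr)(x,0,t) = \tfrac{1}{2}(i^q+i^{q+2})\,\widetilde F_{p+q+1,L}(x,t) = 0,
\end{equation*}
which gives $\bar\partial$-flatness to all orders; and iterating $\partial/\partial z = \frac{1}{2}(\partial/\partial x - i\partial/\partial y)$ together with $\sum_{k=0}^j\binom{j}{k}=2^j$ yields $\partial^j g/\partial z^j(x,0,t)=\partial^j f/\partial x^j(x,t)$, which is \eqref{flat1}.

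The only substantive obstacle is the bookkeeping that identifies $T^l_{(\xi,0,s)}F$ with the holomorphic extension in $z$ of the Taylor polynomial of $f$ in $(x,t)$. Once this identification is in hand, the Whitney estimates, the $\bar\partial$-flatness, and the matching condition all follow mechanically from the $\mathcal{C}_M$-regularity of $f$, with no quantitative input beyond what already defines the class $\mathcal{C}_M$.
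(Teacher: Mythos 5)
Your construction coincides with the paper's: both define the jet on $K\times\{0\}\times T$ by $F_{p,q,L}=i^q D_x^{p+q}D_t^L f$ (equivalently via the formal identity $\sum F_{j,k,Q}\frac{(X-x)^j}{j!}\frac{Y^k}{k!}=\sum D_x^l D_t^Q f\,\frac{(Z-x)^l}{l!}$ with $Z=X+iY$), check the Whitney--$\mathcal{C}_M$ estimates by reducing them to those of the jet of $f$, and apply Proposition~\ref{WhitExt}. You have simply filled in the verifications that the paper dismisses as ``not difficult to see''; the approach is the same.
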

\begin{proof} Apart from the presence of the parameter $t$, the proof follows a standard pattern for $\bar\partial$-flat extension of Whitney jets. We briefly recall the argument for the reader's convenience. For any $(j,k)\in \mathbb{N}^2$, $Q\in \mathbb{N}^m$ and $(x,t)\in K\times T$, we put $ F_{j,k,Q}(x,t)=i^k D_x^{j+k}D_t^Qf(x,t)$, which amounts to define $F_{j,k,Q}$ via the identity of formal power series  
\begin{equation*}
\sum_{(j,k)\in\mathbb{N}^2 
}F_{j,k,Q}(x,t)\frac{(X-x)^j}{j!}\frac{Y^k}{k!}
=\sum_{l\in\mathbb{N} 
}D_x^lD_t^Qf(x,t)\frac{(Z-x)^l}{l!}
\quad\textrm{with }Z=X+iY.
\end{equation*} 
Using the fact that the family of derivatives $(D_x^lD_t^Qf)_{(l,Q)\in \mathbb{N}\times \mathbb{N}^m}$ is a Whitney jet of class $\mathcal{C}_M$ on $K\times T$ viewed as a compact subset of $\mathbb{R}\times\mathbb{R}^m$, it is not difficult to see that $ F=(F_{j,k,Q})_{(j,k,Q)\in\mathbb{N}^2\times\mathbb{N}^m}$ is a Whitney jet of class $\mathcal{C}_M$ on $K\times T$ viewed as a compact subset of $\mathbb{C}\times\mathbb{R}^m$. It then suffices to apply Proposition \ref{WhitExt} to obtain a function $g$ of class $\mathcal{C}_M$ in $\mathbb{C}\times\mathbb{R}^m$ for which \eqref{flat1} and the $\bar\partial$-flatness with respect to $z$ will be immediate consequences of the aforementioned formal identity. 
\end{proof}

\subsection{An extension property}\label{extensprop}
 The following proposition will be needed in the proof of the division  theorem. The notations are those of Section \ref{objects}, and we assume that $\Gamma$ satisfies the geometric assumptions \ref{geomgamma}. 

\begin{prop}\label{mainextprop}
 Let $f$ be a $\mathcal{C}_M$ function in an open neighborhood of $I_\delta\times I_\eta^m$ in $\mathbb{R}\times \mathbb{R}^m$ and let $V$ be an open neighborhood of $D_\delta\cup \Gamma$ in $\mathbb{C}$. Then there is a function $g$ of class $\mathcal{C}_M$ in $\mathbb{C}\times\mathbb{R}^m$, compactly supported in $V\times  \mathbb{R}^m$, which is $\bar\partial$-flat with respect to $z$ on $ (I_\delta\cup\Gamma)\times I_\eta^m$ and which coincides with $f$ on $I_\delta\times I_\eta^m$.
\end{prop}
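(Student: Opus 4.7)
The plan is to assemble a $\mathcal{C}_M$ Whitney jet on the compact set $E=\big(\bigcup_{j=0}^N\Gamma_j\big)\times I_\eta^m$ that is $\bar\partial$-flat in $z$ along each $\Gamma_j\times I_\eta^m$ and reproduces $f$ on $I_\delta\times I_\eta^m$, then to realize it via the $\mathcal{C}_M$-Whitney extension (Proposition \ref{WhitExt}), and finally to localize inside $V\times\mathbb{R}^m$ with $\mathcal{C}_M$ cutoffs.

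I would produce the jet piece by piece. Applying Lemma \ref{extflat1} with $K=I_\delta$ and $T=I_\eta^m$ gives a function $g_0\in\mathcal{C}_M(\mathbb{C}\times\mathbb{R}^m)$ that is $\bar\partial$-flat in $z$ on $I_\delta\times I_\eta^m$ and coincides with $f$ there. Since shrinking $\eta$ ensures $\Gamma\cap\mathbb{R}\subset I_\delta$ and hence $\Gamma_0\subset I_\delta$, the jet of $g_0$ can serve as the piece over $\Gamma_0\times I_\eta^m$. For each $j\geq 1$ I use the biholomorphism $u_j$ parametrizing $\Gamma_j$ from Section \ref{geomgamma}. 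Setting $\tilde f_j(s,t)=g_0(u_j(s),t)$ for real $s$ near $[0,1]$ defines a $\mathcal{C}_M$ function, and a second application of Lemma \ref{extflat1} with $K=[0,1]$ yields $\tilde g_j\in\mathcal{C}_M(\mathbb{C}\times\mathbb{R}^m)$ that is $\bar\partial$-flat in $s$ on $[0,1]\times I_\eta^m$. Pushing forward by the holomorphic inverse $v_j=u_j^{-1}$ (defined on a neighborhood of $\Gamma_j$), I set $g_j(z,t):=\tilde g_j(v_j(z),t)$; because $v_j$ is holomorphic, every iterated $\partial/\partial\bar z$-derivative of $g_j$ unfolds into a linear combination of iterated $\partial/\partial\bar s$-derivatives of $\tilde g_j$ (with coefficients in derivatives of $\bar v_j$) and therefore vanishes on $\Gamma_j\times I_\eta^m$. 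The jet of $g_j$ furnishes the piece over $\Gamma_j\times I_\eta^m$.

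Compatibility between the pieces must be checked only on the common pairwise intersection $\{0\}\times I_\eta^m$. At such a point, derivatives involving at least one $\partial/\partial\bar z$ vanish on both sides by $\bar\partial$-flatness, while the pure $\partial_z^a\partial_t^Q$-derivatives of $g_j(z,t)=\tilde g_j(v_j(z),t)$ at $z=0$, computed by the chain rule and collapsed via $u_j\circ v_j=\mathrm{id}$, reduce to the corresponding derivatives of $g_0$ at $(0,t)$. Assumption (iii) of Section \ref{geomgamma} provides $1$-regular separation of the arcs $\Gamma_j$, which lifts verbatim to $1$-regular separation of the products $\Gamma_j\times I_\eta^m$ in $\mathbb{C}\times\mathbb{R}^m$. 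The classical Whitney gluing principle for $1$-regularly separated compacta, in the $\mathcal{C}_M$-version available through \cite{BBMT, B, CC, Tou}, then fuses the local jets into a genuine Whitney jet of class $\mathcal{C}_M$ on $E$. Proposition \ref{WhitExt} supplies a realization $G\in\mathcal{C}_M(\mathbb{C}\times\mathbb{R}^m)$, and $g:=\chi(z)\psi(t)G(z,t)$, with $\mathcal{C}_M$ cutoffs $\chi$ compactly supported in $V$ and identically $1$ near $D_\delta\cup\Gamma$ and $\psi$ compactly supported in $\mathbb{R}^m$ and identically $1$ near $I_\eta^m$, has all required properties: its support is compact in $V\times\mathbb{R}^m$, it equals $f$ on $I_\delta\times I_\eta^m$, and on a full neighborhood of $(I_\delta\cup\Gamma)\times I_\eta^m$ we have $g\equiv G$, so the $\bar\partial$-flatness there is inherited from $G$.

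The main obstacle is the gluing step: one must verify that the locally defined jets satisfy the global Whitney estimates across distinct arcs. This is precisely where $1$-regular separation enters, in the form of an inequality $|x|+|\xi|\leq C|x-\xi|$ for $x\in\Gamma_i$ and $\xi\in\Gamma_j$ with $i\neq j$; it allows the mixed Taylor remainder $F_J(x)-D^J T^l_\xi F(x)$ to be routed through the common point $0$ and bounded by the individual Whitney estimates of the two pieces, which agree there by the compatibility just established.
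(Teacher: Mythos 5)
Your approach is essentially the paper's: build $\bar\partial$-flat pieces over $\Gamma_0\times I_\eta^m$ and over each $\Gamma_j\times I_\eta^m$ by transporting Lemma \ref{extflat1} along the biholomorphic parametrizations $u_j$, then glue using the $1$-regular separation in assumption (iii), and finally cut off inside $V\times\mathbb{R}^m$. The one place you depart is in the gluing: the paper performs an induction, at each step invoking the two-piece $\mathcal{C}_M$-gluing result (Theorem~2.4 of \cite{Th1}) to merge $\Gamma'_k=\bigcup_{j\leq k}\Gamma_j$ with $\Gamma_{k+1}$, while you propose to assemble all $N+1$ jets on $E$ at once and then apply Whitney extension. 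Your route is fine in principle — the routing-through-$0$ argument is exactly what makes the global Whitney estimates hold — but the references you cite (\cite{BBMT,B,CC,Tou}) give $\mathcal{C}_M$-Whitney extension, not the $\mathcal{C}_M$-gluing of jets under regular separation; the right tool in this class is \cite{Th1}, which is phrased for two sets, so either you invoke it iteratively as the paper does, or you must actually verify the multi-piece Whitney estimates by hand. A second small point: you apply Lemma \ref{extflat1} with $K=I_\delta$ and justify this by claiming that after shrinking $\eta$ one has $\Gamma\cap\mathbb{R}\subset I_\delta$, hence $\Gamma_0\subset I_\delta$; the paper instead takes $K=\Gamma_0$ directly (after extending $f$ to a $\mathcal{C}_M$ function near $\Gamma_0\times T$), which avoids having to track the interdependence of $\delta$ and $\eta$ in Lemma~\ref{Rouche}. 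If $\Gamma\cap\mathbb{R}\not\subset I_\delta$ your $g_0$ is not $\bar\partial$-flat on all of $\Gamma_0\times T$, so this is a point worth being careful about rather than dismissing as obvious.
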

\begin{proof} First, we remark that property (ii) in the geometric assumption \ref{geomgamma} implies that there are biholomorphic maps $u_1,\ldots,u_N$ in an open neighborhood $U$ of the closed unit disk in $\mathbb{C}$ such that for any $j=1,\ldots,N$, we have $u_j(0)=0$ and   
\begin{equation}\label{canonical}
\Gamma_j=u_j([0,1]).
\end{equation} 
Now, put $T=I_\eta^m$. We shall prove by induction that for $k=0,\ldots,N$, there is a function $g_k: (z,t)\mapsto g_k(z,t)$ of class $\mathcal{C}_M$ in $\mathbb{C}\times\mathbb{R}^m$ such that 
\begin{equation}\label{indu1}
g_k\textrm{ is }\bar\partial\textrm{-flat with respect to }z\textrm{ on }\bigg(\bigcup_{j=0}^k\Gamma_j\bigg)\times T,
\end{equation}
\begin{equation}\label{indu3}
\frac{\partial^jg_k}{\partial z^j}(x,t)=\frac{\partial^jf}{\partial x^j}(x,t)\textrm{ for any }(x,t)\in I_\delta\times T\textrm{ and any } j\in\mathbb{N}.
\end{equation}
Once this property is established, it suffices, in view of \eqref{indu1}, \eqref{indu3} and of the assumptions \ref{geomgamma}, to set $g(z,t)=\chi(z,t) g_N(z,t) $ where $\chi$ is a truncation function of class $\mathcal{C}_M$, compactly supported in $V\times\mathbb{R}^m$, and such that $\chi(z,t)=1$ for $ (z,t)\in (D_\delta\cup\Gamma)\times T$.  

The existence of $g_0$ is an immediate consequence of Lemma \ref{extflat1} applied with $K=\Gamma_0$, since we can always assume that $f$ is $\mathcal{C}_M$ in a neighborhood of $K\times T$. Assuming that we have obtained $g_k$ for some $k$ with $0\leq k<N$, we proceed with the construction of $g_{k+1}$. 
Put $\Gamma'_k=\bigcup_{j=0}^k\Gamma_j$. The geometric assumptions \ref{geomgamma} imply
\begin{equation}\label{sep1}
 (\Gamma'_k\times T)\cap(\Gamma_{k+1}\times T)=\{0\}\times T
\end{equation}
and
\begin{equation}\label{sep2}
\Gamma'_k\times T \textrm{ and }\Gamma_{k+1}\times T\textrm{ are }1\textrm{-regularly separated}.
\end{equation}
For $(x,t)$ in a neigborhood of $[0,1]\times T$ in $\mathbb{R}\times\mathbb{R}^m$, we consider 
$ h_k(x,t)=g_k(u_{k+1}(x),t) $, where $u_{k+1}$ is the biholomorphic map associated with $\Gamma_{k+1}$ in \eqref{canonical}.  
Taking into account the stability of $\mathcal{C}_M$ regularity under composition with analytic maps, Lemma \ref{extflat1} yields a function $\widetilde{h}_k: (z,t)\mapsto \widetilde{h}_k(z,t) $ of class $\mathcal{C}_M$ in $\mathbb{C}\times\mathbb{R}^m$ which satisfies the following properties:
\begin{equation}\label{h0flat}
 \widetilde{h}_k\textrm{ is }\bar\partial\textrm{-flat with respect to }z\textrm{ on }[0,1]\times T, 
\end{equation}
\begin{equation}\label{h0der}
\frac{\partial^j \widetilde{h}_k}{\partial z^j}(x,t)= \frac{\partial^j h_k}{\partial x^j}(x,t)\textrm{ for any }(x,t)\in [0,1]\times T \textrm{ and } j\in\mathbb{N}.
\end{equation}
Since $\Gamma_{k+1}$ is contained in the image of $u_{k+1}$, we can define $\widetilde{g}_k(z,t)=\widetilde{h}_k(u_{k+1}^{-1}(z),t)$ for any $(z,t)$ in a neighborhood of $\Gamma_{k+1}\times T$ in $\mathbb{C}\times\mathbb{R}^m$. 
Then, by \eqref{h0flat}, the function $\widetilde{g}_k$ is $\bar\partial$-flat with respect to $z$ on $\Gamma_{k+1}\times T$. Also, taking into account \eqref{sep1}, \eqref{h0der}, the induction assumption and the definition of $h_k$ and $\widetilde{g}_k$, it is not difficult to see that we have
\begin{equation}\label{coincide1}
\frac{\partial^j \widetilde{g}_k}{\partial z^j}(0,t)=\frac{\partial^j g_k}{\partial z^j}(0,t)= \frac{\partial^j f}{\partial x^j}(0,t)\textrm{ for any }j\in\mathbb{N}\textrm{ and }t\in T.
\end{equation}
Thus, the functions $g_k$ and $\widetilde{g}_k$ are respectively of class $\mathcal{C}_M$ in neighborhoods of $\Gamma'_k\times T$ and $\Gamma_{k+1}\times T$, and their respective jets on $\Gamma'_k\times T$ and $\Gamma_{k+1}\times T$ coincide on $\{0\}\times 
T$. Taking \eqref{sep1} and \eqref{sep2} into account, Theorem 2.4 of \cite{Th1} then yields a function $g_{k+1}$ of class $\mathcal{C}_M$ in $\mathbb{C}\times\mathbb{R}^m$ which coincides to infinite order with $g_k$ on $\Gamma'_k\times T$ and with $\widetilde{g}_k$ on $\Gamma_{k+1}\times T$. This easily implies that $g_{k+1}$ has the required properties \eqref{indu1} and \eqref{indu3}, which completes the induction. 
\end{proof}

\section{Division with estimates}\label{divest}

\subsection{A division formula} We follow closely, \emph{mutatis mutandis}, the presentation of \cite{CC2}. The resulting division formula will serve as a basis for our estimates. Being given a compact subset $K$ of $\mathbb{C}$, we put $V(K)=\{\zeta\in \mathbb{C}: d(\zeta,K)<1\} $ and 
$\Omega=\{(z,\zeta)\in \mathbb{C}\times (V(K)\setminus K) : d(z,K)<d(\zeta,K)\}$. The following result is then an immediate application of Lemma 3 of \cite{CC1}. 

\begin{lem}\cite{CC1}\label{lemphi} 
There is a constant $ C_0\geq 0$ and a function $\varphi$ of class $\mathcal{C}^\infty$ in $\Omega$ and such that, for any $(z,\zeta)\in \Omega$, we have 
\begin{equation}\label{phi1}
\varphi(z, \zeta)=1\textrm{ if }d(z,K)<\frac{1}{64}d(\zeta,K),
\end{equation} 
\begin{equation}\label{phi2}
\varphi(z, \zeta)=0\textrm{ if }d(z,K)>\frac{3}{4}d(\zeta,K),
\end{equation} 
\begin{equation}\label{phi3}
\left\vert\frac{\partial\varphi}{\partial\bar z}(z,\zeta)\right\vert\leq C_0 d(\zeta,K)^{-1}.
\end{equation}
\end{lem}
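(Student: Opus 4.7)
The plan is to build $\varphi$ from a smooth one-variable cutoff applied to a ratio of regularized distances to $K$, in the standard style used to construct $\bar\partial$-flat cut-offs near a compact set in the plane.

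First, I would bring in a regularized distance $\Delta$: a nonnegative continuous function on $\mathbb{C}$, vanishing exactly on $K$, of class $\mathcal{C}^\infty$ on $\mathbb{C}\setminus K$, comparable to $d(\cdot,K)$ via $c_1 d(z,K)\leq \Delta(z)\leq c_2 d(z,K)$, and satisfying the scale-invariant estimates $|D^\alpha \Delta(z)|\leq C_\alpha d(z,K)^{1-|\alpha|}$ for every multi-index $\alpha$. Such a $\Delta$ is produced in Stein's classical way from a Whitney decomposition of $\mathbb{C}\setminus K$, and the ratio $c_2/c_1$ can be made as close to $1$ as desired by choosing a sufficiently fine Whitney partition; I fix a construction for which $c_2/c_1<4\sqrt{3}$. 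The case $|\alpha|=1$ yields in particular a uniform bound $|\partial\Delta/\partial\bar z|\leq C_1$ on $\mathbb{C}\setminus K$.

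Second, I would choose a non-increasing $\chi\in\mathcal{C}^\infty(\mathbb{R},[0,1])$ with $\chi(s)=1$ for $s\leq c_2/(64c_1)$ and $\chi(s)=0$ for $s\geq 3c_1/(4c_2)$; the constraint $c_2/c_1<4\sqrt{3}$ is precisely the condition that makes the left threshold strictly smaller than the right one. Setting
\[
\varphi(z,\zeta)=\chi\!\left(\frac{\Delta(z)}{\Delta(\zeta)}\right),\qquad (z,\zeta)\in\Omega,
\]
we obtain a well-defined function since $\zeta\notin K$ forces $\Delta(\zeta)>0$. On the open part of $\Omega$ where $z\notin K$, $\varphi$ is $\mathcal{C}^\infty$ as a composition of $\mathcal{C}^\infty$ functions; near a point $(z_0,\zeta_0)\in\Omega$ with $z_0\in K$, the inequality $0=d(z_0,K)<d(\zeta_0,K)$ makes $\varphi$ identically $1$ on a neighborhood, so $\varphi\in\mathcal{C}^\infty(\Omega)$.

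Properties \eqref{phi1} and \eqref{phi2} are then immediate from the two-sided comparison $\Delta\sim d(\cdot,K)$ and the chosen thresholds of $\chi$. For \eqref{phi3}, the chain rule gives
\[
\frac{\partial\varphi}{\partial\bar z}(z,\zeta)=\frac{\chi'\!\left(\Delta(z)/\Delta(\zeta)\right)}{\Delta(\zeta)}\,\frac{\partial\Delta}{\partial\bar z}(z),
\]
and the support of $\chi'$ forces $\Delta(z)\sim\Delta(\zeta)$, in particular $z\notin K$, so the bound $|\partial\Delta/\partial\bar z(z)|\leq C_1$ applies; combining with $\Delta(\zeta)\geq c_1 d(\zeta,K)$ yields \eqref{phi3} with $C_0=\|\chi'\|_\infty C_1/c_1$. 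The only genuinely technical step in the whole argument is producing the regularized distance $\Delta$ with good enough comparison constants, which is the harmonic-analytic content extracted in Lemma~3 of \cite{CC1}; once $\Delta$ is in hand, the rest is straightforward bookkeeping.
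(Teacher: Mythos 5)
The paper gives no proof of Lemma~\ref{lemphi}; it is quoted verbatim from Chaumat--Chollet as ``an immediate application of Lemma~3 of \cite{CC1}.'' Your proposal supplies a genuine self-contained construction, which is valuable, and the skeleton of your argument is correct: take a regularized distance $\Delta\sim d(\cdot,K)$ with the scale-invariant derivative bounds, compose a fixed one-variable cutoff with the ratio $\Delta(z)/\Delta(\zeta)$, and read off \eqref{phi1}--\eqref{phi3} from the two-sided comparison and the chain rule. The verification of smoothness across $z\in K$ (where $\varphi\equiv 1$ in a neighborhood) and the final estimate $C_0=\|\chi'\|_\infty C_1/c_1$ are all sound, and the arithmetic constraint $(c_2/c_1)^2<48$, i.e.\ $c_2/c_1<4\sqrt3$, is exactly the condition needed for the two thresholds of $\chi$ to be compatible.

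The one point you should tighten is the assertion that ``the ratio $c_2/c_1$ can be made as close to $1$ as desired by choosing a sufficiently fine Whitney partition.'' There is no notion of ``refining'' a Whitney decomposition: the cubes $Q$ satisfy $\operatorname{diam}Q\le d(Q,K)\le 4\operatorname{diam}Q$ with fixed absolute constants, and Stein's construction $\Delta=\sum_Q(\operatorname{diam}Q)\phi_Q$ inherits comparison constants from those fixed bounds, which need not be below $4\sqrt3$. What you actually want is a regularized distance obtained by averaging $d(\cdot,K)$ at a scale $\varepsilon\cdot d(\cdot,K)$ for small $\varepsilon$: for instance, fix a preliminary regularized distance $\delta_0$ with $a_1 d\le\delta_0\le a_2 d$, and set $\Delta(z)=\int d(z-\varepsilon\delta_0(z)w,K)\,\rho(w)\,dw$ for a mollifier $\rho$ supported in the unit ball. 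Since $d(\cdot,K)$ is $1$-Lipschitz this gives $(1-\varepsilon a_2)d\le\Delta\le(1+\varepsilon a_2)d$, hence $c_2/c_1\to1$ as $\varepsilon\to0$, at the (harmless) cost of $C_1=C_1(\varepsilon)$; smoothness and the homogeneous derivative bounds follow by the change of variables $w'=z-\varepsilon\delta_0(z)w$. With this replacement the argument is complete and constitutes a clean alternative to citing CC1, trading their specific lemma for a harmonic-analysis-style construction.
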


In what follows, the notations are those of Section \ref{objects}, and we always assume that $\Gamma$ satisfies the geometric assumptions \ref{geomgamma}. We set
\begin{equation*}
K= I_\delta\cup \Gamma\quad\textrm{and}\quad V=V(K).  
\end{equation*}
Since we can always assume $\delta<1$, we have $D_\delta\cup \Gamma\subset V$. Note, also, that 
 $P(z,t)$ does not vanish in $(V\setminus K)\times I_\eta^m$. 

Elementary considerations show that 
for any $(x,t)\in \mathbb{C}\times I_\eta^m$ and any $(z,t)\in \mathbb{C}\times I_\eta^m$, we have 
\begin{equation}\label{polynom}
P(x,t)-P(z,t)=(x-z)\sum_{j=0}^{d-1}S_j(z,t)x^j,
\end{equation}
where each term $S_j(z,t)$ is a polynomial in $z$ whose coefficients are affine functions of $a_1(t),\ldots,a_d(t)$. In particular, they are real-analytic with respect to $t$ in a neighborhood of $I_\eta^m$.

The following division formula, where $d\lambda$ denotes the plane Lebesgue measure, then comes as an immediate adaptation of Proposition 4 of \cite{CC1} or Lemma 4.2 of \cite{CC2}. 

\begin{prop}\label{divisionCC}\cite{CC1,CC2} 
Let $g$ be a function of class $\mathcal{C}^\infty$ in $\mathbb{C}\times\mathbb{R}^m$, compactly supported in $V\times\mathbb{R}^m$. Assume that $g$ is $\bar\partial$-flat with respect to $z$ on $ K\times I_\eta^m$. We then have, for any $(x,t)\in I_\delta\times I_\eta^m$,
\begin{equation*}
g(x,t)=P(x,t)q(x,t)+\sum_{j=0}^{d-1}r_j(t)x^j
\end{equation*}
where $q$ and $r_0,\ldots,r_{d-1}$ are given by integral representations
\begin{equation}\label{rep1}
q(x,t)= 
\int_{V\setminus K}
\frac{\partial g}{\partial\bar\zeta}(\zeta,t)\mathcal{K}(x,\zeta,t)d\lambda(\zeta)
\end{equation}
and
\begin{equation}\label{rep2}
r_j(t)= 
\int_{V\setminus K}
\frac{\partial g}{\partial\bar\zeta}(\zeta,t)\mathcal{K}_j(\zeta,t)d\lambda(\zeta)\textrm{ for } j=0,\ldots, d-1,
\end{equation}
with respective kernels
\begin{equation}\label{defK}
\mathcal{K}(x,\zeta,t)=\frac{1}{\pi^2}\int_{V\setminus K}\frac{\partial\varphi}{\partial\bar z}(z,\zeta)\frac{1}{P(z,t)(\zeta-z)(z-x)}d\lambda(z)
\end{equation} 
and
\begin{equation}\label{defKj}
\mathcal{K}_j(\zeta,t)=\frac{1}{\pi^2}\int_{V\setminus K}\frac{\partial\varphi}{\partial\bar z}(z,\zeta)\frac{S_j(z,t)}{P(z,t)(\zeta-z)}d\lambda(z).
\end{equation}
Here, $\varphi$ is the truncation function from Lemma \ref{lemphi}.
\end{prop}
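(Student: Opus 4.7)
The plan is to derive the integral representations by iterating the Cauchy--Pompeiu formula twice and then using the polynomial identity \eqref{polynom} to isolate the factor $P(x,t)$ from the Cauchy kernel. This is essentially the argument of Chaumat--Chollet adapted to our geometric setup; the hypotheses on $g$ are precisely what is needed to make the resulting integrals well defined and the formal manipulations rigorous.

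First, since $g(\cdot,t)$ is compactly supported in $V$ and $\bar\partial$-flat in $z$ on $K\times I_\eta^m$, a first application of Cauchy--Pompeiu gives, for $(x,t)\in I_\delta\times I_\eta^m$,
\[
g(x,t) \;=\; -\frac{1}{\pi}\int_{V\setminus K}\frac{\partial g}{\partial\bar\zeta}(\zeta,t)\,\frac{d\lambda(\zeta)}{\zeta - x}.
\]
Next, I would rewrite $1/(\zeta-x)$ using $\varphi$. For fixed $\zeta\in V\setminus K$, the function $z\mapsto\varphi(z,\zeta)/(\zeta-z)$ is smooth (by \eqref{phi2}, $\varphi(\cdot,\zeta)$ vanishes near the pole $z=\zeta$) and compactly supported in $V$, and it equals $1/(\zeta-x)$ at $x\in I_\delta\subset K$ by \eqref{phi1}. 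Applying Cauchy--Pompeiu to it at $x$ and using the holomorphy of $z\mapsto 1/(\zeta-z)$ off its pole yields
\[
\frac{1}{\zeta - x} \;=\; -\frac{1}{\pi}\int_{V\setminus K}\frac{\partial\varphi}{\partial\bar z}(z,\zeta)\,\frac{d\lambda(z)}{(\zeta-z)(z-x)},
\]
the support restriction to $V\setminus K$ following from \eqref{phi1}--\eqref{phi2} (assuming $\delta<1$, so that the relevant annular region lies in $V$). Finally, the algebraic step: rewriting \eqref{polynom} as $P(z,t)=P(x,t)-(x-z)\sum_j S_j(z,t)x^j$ and dividing by $P(z,t)(z-x)$ produces
\[
\frac{1}{z-x} \;=\; \frac{P(x,t)}{P(z,t)(z-x)} \;+\; \frac{1}{P(z,t)}\sum_{j=0}^{d-1}S_j(z,t)\,x^j,
\]
which is valid on $(V\setminus K)\times I_\eta^m$ since $P(z,t)\neq 0$ there. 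Substituting this into the previous display, then into the Cauchy--Pompeiu representation of $g$, and invoking Fubini to swap the $(z,\zeta)$ integrations yields the announced formula: the first summand pulls $P(x,t)$ outside the double integral and gives $P(x,t)q(x,t)$ with $q,\mathcal{K}$ as in \eqref{rep1}, \eqref{defK}; the second summand pulls each monomial $x^j$ outside the $z$-integral and gives $r_j(t)x^j$ with $r_j,\mathcal{K}_j$ as in \eqref{rep2}, \eqref{defKj}, the factor $1/\pi^2$ arising from the two Cauchy--Pompeiu steps.

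The hard part is the justification of Fubini. On the support of $\partial\varphi/\partial\bar z(\cdot,\zeta)$ the inner integrand is bounded since one is uniformly away from $z=\zeta$, from $z=x\in K$, and from the zeros of $P(z,t)$ (which lie in $\Gamma\subset K$); so for each fixed $\zeta\in V\setminus K$ the inner $z$-integral poses no difficulty. As $\zeta$ approaches $K$, however, the inner kernels $\mathcal{K}(x,\zeta,t)$ and $\mathcal{K}_j(\zeta,t)$ may grow like a negative power of $d(\zeta,K)$, and the outer integral converges only because the $\bar\partial$-flatness of $g$ forces $\partial g/\partial\bar\zeta(\zeta,t)$ to vanish faster than any power of $d(\zeta,K)$. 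Once this integrability is in place the proof reduces to a direct reading off of coefficients.
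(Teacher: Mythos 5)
Your proposal is correct and reproduces the intended argument. Note that the paper itself does not prove Proposition \ref{divisionCC}; it states the formula as "an immediate adaptation" of Proposition 4 of \cite{CC1} and Lemma 4.2 of \cite{CC2}, and your two-fold Cauchy--Pompeiu construction combined with the algebraic decomposition from \eqref{polynom} is exactly the mechanism those references use. The only minor imprecision is the phrase "invoking Fubini to swap the $(z,\zeta)$ integrations": no actual interchange of the order of integration occurs, since the $\zeta$-integral stays outermost throughout; what is really needed is absolute convergence of the iterated integral, which, as you correctly observe, follows from the infinite-order vanishing of $\partial g/\partial\bar\zeta$ on $K\times I_\eta^m$ combined with the at-most-polynomial blow-up of the kernels in $1/d(\zeta,K)$ (the latter being the content of Proposition \ref{Cauchy} and Lemma \ref{estimkern}).
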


\subsection{Estimates and the main result}
Regularity properties and estimates for the kernels $\mathcal{K}$ and $\mathcal{K}_0,\ldots,\mathcal{K}_{d-1}$ are established in the next lemma.  

\begin{lem}\label{estimkern}
 For a sufficiently small $\varepsilon \in ]0,\eta[$, the functions $\mathcal{K}$ and $\mathcal{K}_j$ are $\mathcal{C}^\infty$ on $I_\delta\times (V\setminus K)\times I_\varepsilon^m$ and $ (V\setminus K)\times I_\varepsilon^m$, respectively. Moreover, there are constants $C\geq 0$ and $\nu\geq 1$ such that, for any integer $k\geq 0$, any multi-index $R\in \mathbb{N}^n$of length $r$, and any $(x,\zeta,t)\in I_\delta\times (V\setminus K)\times I_\varepsilon^m$, we have the estimates
\begin{equation*}
\vert D_x^kD_t^R\mathcal{K}(x,\zeta,t)\vert\leq C^{k+r+1}k!\,r!\, d(\zeta,K)^{-(k+\sigma r+\nu+3)}
\end{equation*}
and, for $j=0,\ldots,d-1$,
\begin{equation*}
\vert D_t^R\mathcal{K}_j(\zeta,t)\vert\leq C^{r+1}r!\, d(\zeta,K)^{-(\sigma r+\nu+2)}.
\end{equation*}
\end{lem}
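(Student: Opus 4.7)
The plan is to differentiate the integrals \eqref{defK} and \eqref{defKj} under the integral sign and then bound each factor of the differentiated integrand using the size of the support of $\partial\varphi/\partial\bar z(\cdot,\zeta)$.

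The first step is a support analysis. By \eqref{phi1} and \eqref{phi2}, the map $z \mapsto \partial\varphi/\partial\bar z(z,\zeta)$ is supported in the annulus
$$A(\zeta) = \bigl\{z \in V\setminus K : \tfrac{1}{64}d(\zeta,K) \leq d(z,K) \leq \tfrac{3}{4}d(\zeta,K)\bigr\}.$$
On $A(\zeta)$, the reverse triangle inequality gives $|\zeta - z| \geq \tfrac{1}{4}d(\zeta,K)$; since $x \in I_\delta \subset K$ and $\Gamma \subset K$, we also have $|z-x| \geq d(z,K) \geq \tfrac{1}{64}d(\zeta,K)$ and $d(z,\Gamma) \geq d(z,K) \geq \tfrac{1}{64}d(\zeta,K)$. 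Every factor in the integrands thus stays bounded away from $0$ uniformly on $A(\zeta)$, which justifies differentiation under the integral sign and yields the asserted $\mathcal{C}^\infty$ regularity.

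For the bound on $\mathcal{K}$, I would multiply four pointwise estimates on the differentiated integrand: (a) $|\partial\varphi/\partial\bar z| \leq C_0\, d(\zeta,K)^{-1}$ from Lemma \ref{lemphi}; (b) $|D_t^R(1/P(z,t))| \leq C_1^{r+1} r!\,(64/d(\zeta,K))^{\sigma r + \nu}$ from Proposition \ref{Cauchy} (with $\varepsilon>0$ chosen there) combined with $d(z,\Gamma) \geq \tfrac{1}{64} d(\zeta,K)$ --- this is the crucial step in which $\sigma$ enters; (c) $|\zeta - z|^{-1} \leq 4\, d(\zeta,K)^{-1}$; (d) $|D_x^k((z-x)^{-1})| = k!\,|z-x|^{-(k+1)} \leq k!\,64^{k+1} d(\zeta,K)^{-(k+1)}$. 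Since $V$ is bounded, the Lebesgue measure of $A(\zeta)$ is uniformly bounded, so integration contributes only a harmless constant. Collecting all constants into a single $C$ and noting that $1 + (\sigma r + \nu) + 1 + (k+1) = k + \sigma r + \nu + 3$ yields the stated estimate.

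For $\mathcal{K}_j$, the factor $1/(z-x)$ is replaced by $S_j(z,t)$, whose coefficients are real-analytic in $t$ on a neighborhood of $I_\eta^m$ and whose degree in $z$ is at most $d-1$; so $|D_t^R S_j(z,t)| \leq D^{r+1} r!$ on $\overline{V} \times I_\varepsilon^m$ by standard Cauchy estimates. Applying the Leibniz rule to $D_t^R(S_j/P)$, one sees that the worst $\zeta$-dependence arises when all $t$-derivatives fall on $1/P$, giving $|D_t^R(S_j/P)| \leq C_2^{r+1} r!\, d(\zeta,K)^{-(\sigma r + \nu)}$. Combined with (a), (c) and integration over the bounded support, this produces the exponent $1 + 1 + (\sigma r + \nu) = \sigma r + \nu + 2$. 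Beyond this bookkeeping of factorials and powers, there is no substantive obstacle: the essential content of the lemma is the insertion of the Proposition \ref{Cauchy} estimate into the kernel representation on the annulus $A(\zeta)$, where $d(z,\Gamma)$ is comparable to $d(\zeta,K)$.
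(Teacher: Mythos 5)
Your proof is correct and follows essentially the same route as the paper: restrict the integration to the annulus where $\frac{1}{64}d(\zeta,K)\leq d(z,K)\leq\frac{3}{4}d(\zeta,K)$ via \eqref{phi1}--\eqref{phi2}, use the lower bounds on $|\zeta-z|$, $|z-x|$ and $d(z,\Gamma)$ there, plug in the estimate from Proposition \ref{Cauchy} together with \eqref{phi3}, and integrate over the bounded support. The paper states the same ingredients a bit more tersely and leaves the exponent bookkeeping and the Leibniz argument for $S_j/P$ implicit, which you have correctly carried out.
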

\begin{proof} Properties \eqref{phi1} and \eqref{phi2} imply that the integration in \eqref{defK} and \eqref{defKj} is actually performed on the set $ \{z\in V\setminus K : \frac{1}{64}d(\zeta,K)\leq d(z,K)\leq \frac{3}{4}d(\zeta,K)\}$. For any $z$ in that set, we have
\begin{equation}\label{Zz}
\vert \zeta-z\vert\geq d(\zeta,K)-d(z,K)\geq \frac{1}{4}d(\zeta,K)
\end{equation}
and
\begin{equation}\label{zx}
\vert z-x\vert\geq d(z,K)\geq \frac{1}{64}d(\zeta,K).
\end{equation}
Since we also have $d(z,\Gamma)\geq d(z,K)$, Proposition \ref{Cauchy} then yields $\varepsilon\in ]0,\eta[$, $\nu\geq 1$ and $C_1\geq 0$ such that we have
\begin{equation}\label{estiminvPK}
\left\vert D_t^R \left(\frac{1}{P(z,t)}\right)\right\vert\leq C_1^{r+1}r!\, d(\zeta,K)^{-(\sigma r+\nu)}
\end{equation}
for any such $z$, any multi-index $R\in \mathbb{N}^n$ and any $t\in I_\varepsilon^m$. From 
\eqref{phi3}, \eqref{Zz}, \eqref{zx} and \eqref{estiminvPK}, and from the analyticity of the coefficients of the polynomial $S_j$, it is then easy to derive the $\mathcal{C}^\infty$ regularity of the kernels $\mathcal{K}$ and $\mathcal{K}_j$, as well as the estimates on their derivatives.  
\end{proof}

We are now ready to state the main result. 

\begin{thm}\label{main} 
Let $M$ be a strongly regular sequence and let $f$ be a germ of function of class $\mathcal{C}_M$ at the origin in $\mathbb{R}\times \mathbb{R}^m$. Then there are germs of functions $q$ and $r_0,\ldots,r_{d-1}$ of class $\mathcal{C}_{M^\sigma}$ at the origin in $\mathbb{R}\times \mathbb{R}^m$ and $\mathbb{R}^m$ respectively, such that
\begin{equation}\label{divfinal}
f(x,t)=P(x,t)q(x,t)+\sum_{j=0}^{d-1}r_j(t)x^j.
\end{equation}
\end{thm}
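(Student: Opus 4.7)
\medskip

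\noindent\textbf{Proof proposal.} The strategy is to assemble the ingredients accumulated in the paper: the $\bar\partial$-flat extension from Proposition \ref{mainextprop}, the integral division formula of Proposition \ref{divisionCC}, and the kernel estimates of Lemma \ref{estimkern}, with the weight-trading mechanism provided by the flatness bounds for $\mathcal{C}_M$ jets and the key identity \eqref{equivseq}. First, I would pick a representative of $f$ that is $\mathcal{C}_M$ on an open neighborhood of $I_\delta\times I_\eta^m$, apply Proposition \ref{mainextprop} to produce a function $g$ of class $\mathcal{C}_M$ on $\mathbb{C}\times\mathbb{R}^m$ which is compactly supported in $V\times\mathbb{R}^m$, $\bar\partial$-flat with respect to $z$ on $K\times I_\eta^m$ (with $K=I_\delta\cup\Gamma$), and coincides with $f$ on $I_\delta\times I_\eta^m$. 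Proposition \ref{divisionCC} then yields the division formula \eqref{divfinal} on $I_\delta\times I_\eta^m$, with $q$ and $r_0,\ldots,r_{d-1}$ explicitly given by \eqref{rep1} and \eqref{rep2}. It remains to bound the derivatives of these integrals.

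For $q$, I would differentiate under the integral sign and use Leibniz:
\begin{equation*}
D_x^kD_t^R q(x,t)=\sum_{R_1+R_2=R}\binom{R}{R_1}\int_{V\setminus K}D_t^{R_1}\!\left(\frac{\partial g}{\partial\bar\zeta}\right)\!(\zeta,t)\,D_x^kD_t^{R_2}\mathcal{K}(x,\zeta,t)\,d\lambda(\zeta).
\end{equation*}
Since $\partial g/\partial\bar\zeta$ is a $\mathcal{C}_M$ function whose derivatives all vanish on $K\times I_\eta^m$, the standard Whitney remainder argument gives, for every integer $l\geq r_1$ and every $(\zeta,t)\in (V\setminus K)\times I_\varepsilon^m$,
\begin{equation*}
\left|D_t^{R_1}\!\left(\frac{\partial g}{\partial\bar\zeta}\right)\!(\zeta,t)\right|\leq C_1^{l+1}r_1!\,M_{l+1}\,d(\zeta,K)^{l+1-r_1}.
\end{equation*}
Combining this flatness estimate with the kernel bound from Lemma \ref{estimkern} makes the integrand controlled by a power $d(\zeta,K)^{l+1-r_1-k-\sigma r_2-\nu-3}$. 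I would choose $l$ as the smallest non-negative integer making this exponent strictly greater than $-2$, so that the integral over the bounded set $V\setminus K$ converges to a finite value (and is, in fact, bounded independently of the data). With such a choice, $l+1\leq r_1+k+\sigma r_2+\nu+O(1)$.

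The crucial arithmetic point is that $\sigma\geq 1$ forces
\begin{equation*}
r_1+\sigma r_2\leq \sigma(r_1+r_2)=\sigma r,
\end{equation*}
because $(1-\sigma)r_1\leq 0$. Therefore $l+1\leq k+\sigma r+\nu+O(1)\leq \sigma(k+r)+O(1)$, and the moderate growth assumption \eqref{modg} combined with \eqref{equivseq} yields constants such that
\begin{equation*}
M_{l+1}\leq C_2^{k+r+1}M_{\lfloor\sigma(k+r)\rfloor}\leq C_3^{k+r+1}(M_{k+r})^\sigma.
\end{equation*}
Absorbing the Leibniz combinatorial factors and the factorials $r_1!\,r_2!\,k!$ into $C^{k+r+1}(k+r)!$ in the usual way, one obtains
\begin{equation*}
\left|D_x^kD_t^R q(x,t)\right|\leq C^{k+r+1}(k+r)!\,(M_{k+r})^\sigma
\end{equation*}
on $I_\delta\times I_\varepsilon^m$, which is precisely the $\mathcal{C}_{M^\sigma}$ estimate. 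The treatment of $r_j$ via \eqref{rep2} is entirely analogous (indeed simpler, since there is no $x$-dependence), using that $S_j(z,t)$ is polynomial in $z$ and analytic in $t$, so its $t$-derivatives contribute only harmless analytic factors.

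The main obstacle is the bookkeeping in the trade-off that determines $l$: one has to carry through the flatness remainder with its combinatorial factors, match it against the kernel bound carrying the factor $\sigma$ only on the $t$-derivatives, and verify that the split $R=R_1+R_2$ never leaks out of the region $l+1\leq \sigma(k+r)+O(1)$. Everything else (Leibniz, differentiating under the integral, convergence of the residual $d(\zeta,K)^{-1+\varepsilon}$-type integral over $V\setminus K$, and the passage from $(M_{k+r})^\sigma$ to the strongly regular sequence $M^\sigma$ recalled after \eqref{equivseq}) is routine once the key inequality $r_1+\sigma r_2\leq\sigma r$ has been exploited.
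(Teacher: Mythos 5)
Your strategy is the same as the paper's: produce the $\bar\partial$-flat extension $g$ (Proposition \ref{mainextprop}), plug it into the integral division formula (Proposition \ref{divisionCC}), estimate the kernels (Lemma \ref{estimkern}) and the flat derivatives of $\partial g/\partial\bar\zeta$ by a Taylor/Whitney remainder of adjustable order, and exploit the inequality $r_1+\sigma r_2\leq\sigma(r_1+r_2)$ that $\sigma\geq 1$ makes available, before converting $M_{\lfloor\sigma(k+r)\rfloor}$ to $(M_{k+r})^\sigma$ via \eqref{equivseq}. The decomposition, the quantities being estimated, and the key arithmetic step all match the paper's argument.

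One small but genuine slip: you claim it suffices to choose $l$ so that the exponent $l+1-r_1-k-\sigma r_2-\nu-3$ exceeds $-2$, asserting this makes $\int_{V\setminus K}d(\zeta,K)^{e}\,d\lambda(\zeta)$ finite. That threshold is the one for a point singularity. Here $K=I_\delta\cup\Gamma$ is a \emph{one-dimensional} compact (a segment and a union of admissible arcs), and the integral $\int_{V\setminus K}d(\zeta,K)^{e}\,d\lambda(\zeta)$ converges only for $e>-1$. So as written your choice of $l$ does not guarantee convergence. The fix is trivial (push $l$ one unit higher so $e>-1$, or better $e>0$), and it does not disturb the $O(1)$ accounting, but you should be aware of it. The paper in fact sidesteps the question entirely by taking $p=\lfloor k+\sigma r+\nu+3\rfloor+1$, so that the resulting power of $d(\zeta,K)$ is \emph{strictly positive}; the integrand is then simply bounded on the bounded set $V\setminus K$ and no discussion of integrability is needed. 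Adopting that choice makes your argument cleaner and removes the dependence on the dimension of $K$. Apart from this point, the reasoning (Leibniz under the integral, the absorption of the factorials $k!\,r_1!\,r_2!$ and of the combinatorial factor $2^r$ into $C^{k+r+1}(k+r)!$, and the final conversion to $\mathcal{C}_{M^\sigma}$) is correct and reproduces the paper's proof; note only that the paper records the slightly sharper anisotropic bound $C^{k+l+1}(k+l)!\,M_{\lfloor k+\sigma l\rfloor}$ before passing to $(M_{k+l})^\sigma$, which is what yields the refinement mentioned in the remark after the theorem.
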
 
\begin{proof} We can assume that $f$ is of class $\mathcal{C}_M$ in a neighborhood of $I_\delta\times I_\eta^m$. With $f$ we associate a function $g$ as in Proposition \ref{mainextprop}, and we use Proposition \ref{divisionCC} to obtain a division formula for $g$, which in turn yields \eqref{divfinal}, where the functions $q$ and  $r_0,\ldots,r_{d-1}$ are given by the integral representations \eqref{rep1} and \eqref{rep2}. We proceed to establish the $\mathcal{C}_{M^\sigma}$ regularity of these functions. For any $(x,\zeta,t)\in I_\delta\times (V\setminus K)\times I_\eta^m$, any integer $k\geq 0$ and any multi-index $L\in \mathbb{N}^n$, the Leibniz formula yields
\begin{equation}\label{Leibniz}
D_x^kD_t^L\left(\frac{\partial g}{\partial\bar\zeta}(\zeta,t)\mathcal{K}(x,\zeta,t)\right)=\sum_{Q+R=L}\frac{L!}{Q!R!}\left(D_t^Q\frac{\partial g}{\partial\bar\zeta}(\zeta,t)\right)\left(D_x^kD_t^R\mathcal{K}(x,\zeta,t)\right).
\end{equation}
Denote by $\hat\zeta$ be a point in $K$ such that $ \vert \zeta-\hat\zeta\vert=d(\zeta,K)$. Since $ \partial g/\partial\bar\zeta$ is of class $\mathcal{C}_M$ in $\mathbb{C}\times\mathbb{R}^m$ and vanishes together with all its derivatives on $K\times I_\eta^m$, an application of the Taylor formula between $(\zeta,t)$ and $(\hat\zeta,t)$ shows that for a suitable constant $C_1\geq 0$, we have  
\begin{equation}\label{Taylor}
\left\vert D_t^Q\frac{\partial g}{\partial\bar\zeta}(\zeta,t)\right\vert\leq C_1^{p+q+1}M_{p+q}d(\zeta,K)^p\quad\textrm{for any } (p,Q)\in \mathbb{N}\times\mathbb{N}^m. 
\end{equation}
Given $(Q,R)\in \mathbb{N}^m\times\mathbb{N}^m $ such that $Q+R=L$, we set 
$ p=\lfloor k+\sigma r +\nu+3 \rfloor +1 $. Since $\sigma\geq 1$, we have $ p+q \leq \lfloor k+\sigma l\rfloor+ \lfloor\nu\rfloor+4$ which, together with \eqref{modg}, implies $M_{p+q}\leq C_2^{k+ l+1}M_{\lfloor k+\sigma l\rfloor}$ for a suitable $C_2\geq 0$. We also have $k!\, r!\leq (k+r)!\leq (k+l)!$. Combining \eqref{Taylor} with Lemma \ref{estimkern}, we therefore see that for $(x,\zeta,t)\in I_\delta\times (V\setminus K)\times I_\varepsilon^m$, each term of the summation in \eqref{Leibniz} satisfies
\begin{equation*}
\left\vert\left(D_t^Q\frac{\partial g}{\partial\bar\zeta}(\zeta,t)\right)\left(D_x^kD_t^R\mathcal{K}(x,\zeta,t)\right)\right\vert\leq C_3^{k+l+1}(k+l)!\, M_{\lfloor k+\sigma l\rfloor}
\end{equation*}
for a suitable constant $C_3\geq 0$. Since $ \sum_{Q+R=L}\frac{L!}{Q!R!}=2^l$, we derive 
\begin{equation*}
\left\vert D_x^kD_t^L\left(\frac{\partial g}{\partial\bar\zeta}(\zeta,t)\mathcal{K}(x,\zeta,t)\right)\right\vert\leq (2C_3)^{k+l+1}(k+l)!\, M_{\lfloor k+\sigma l\rfloor}
\end{equation*}
for any $(x,\zeta,t)\in I_\delta\times (V\setminus K)\times I_\varepsilon^m$.  
Together with the integral representation \eqref{rep1}, this yields an estimate of the form
\begin{equation}\label{aniso}
\left\vert D_x^kD_t^L q(x,t)\right\vert\leq C_4^{k+l+1}(k+l)!M_{\lfloor k+\sigma l \rfloor}
\end{equation} 
for any $(x,t)\in I_\delta\times I_\varepsilon^m$. 
From \eqref{equivseq} and \eqref{aniso}, we conclude that $q$ defines a germ of class $\mathcal{C}_{M^\sigma}$ at the origin in $\mathbb{R}\times\mathbb{R}^m$. The proof that $r_0,\ldots,r_{d-1}$ are germs of class $\mathcal{C}_{M^\sigma}$ at the origin in $\mathbb{R}^m$ goes exactly along the same lines. 
\end{proof}

\begin{rem} For the quotient $q$, we have actually proved a slightly more precise result: \eqref{aniso} indeed shows that $(x,t)\mapsto q(x,t)$ is $\mathcal{C}_{M}$ with respect to $x$ and $\mathcal{C}_{M^\sigma}$ with respect to $t$. 
\end{rem}

\begin{rem} We stress that, due to the non-quasianalytic context, the choice for the quotient and the remainder in the division formula is not unique, and only a suitable choice of $q$ and $r_0,\ldots,r_{d-1}$ may provide sharp estimates. For example, given a real $\alpha>0$, consider the function $h$ defined by $h(t)=e^{-\vert t\vert^{-1/\alpha}}$ for $t\neq 0$, and $h(0)=0$. It is well known that $h$ belongs to the Gevrey class $\mathcal{G}^{1+\alpha}$. For $m=1$ and $d=2$, set $P(x,t)=x^2+t^4$ and $f(x,t)=h(t)$. We can  write 
\eqref{divfinal} with the obvious choice $q=0$, $r_1=0$, $r_0=h$, which provides a quotient and a remainder in the same class as $f$. But \eqref{divfinal} also holds with $r_0=r_1=0$ and $q(x,t)=\frac{h(t)}{P(x,t)}$ for $(x,t)\neq (0,0)$, $q(0,0)=0$. In this case, it is known that $q$ belongs to the Gevrey class $\mathcal{G}^{1+2\alpha}$, but not to any smaller class \cite{Th1b}.      
\end{rem}

\subsection{Discussion of examples} 

\begin{exam}\label{above}
Given an integer $p\geq 1$, set $m=1$ and $P(x,t)=x^2+t^{2p}$. It has been shown in Section \ref{examp} that we have $\sigma=1$, hence the division formula \eqref{divfinal} holds with the quotient and remainder belonging to the same class $\mathcal{C}_M$ as $f$. A particular case of this result already appeared as a remark in Section 9 of \cite{CC2}, where it was obtained by an \emph{ad hoc} argument applicable when $f$ depends only on $x$. Our result provides a geometric interpretation of this remark. 
\end{exam}

\begin{exam}
Given an integer $d\geq 2$, set $m=1$ and $P(x,t)=x^d-t^2$. It has been shown in Section \ref{examp} that we have $\sigma=d/2$. We shall now prove, as announced in the Introduction, that the corresponding $\mathcal{C}_{M^\sigma}$ estimate in the division formula of Theorem \ref{main} is optimal. Writing \eqref{divfinal} for a $\mathcal{C}_M$ function $f: x\mapsto f(x)$ that depends only on $x$, and setting $ (x,t)=(y^2,y^d)$ for $y$ in a neighborhood of $0$, we get
\begin{equation}\label{identif}
f(y^2)= \sum_{j=0}^{d-1}r_j(y^d)y^{2j}.
\end{equation}
Assume that $r_0,\ldots,r_{d-1}$ are of class $\mathcal{C}_N$ for some sequence $N$ satisfying \eqref{norm}, \eqref{logc} and \eqref{modg}. Then, for each integer $k\geq 0$, an identification of the terms of degree $2dk$ in the formal Taylor series at $0$ of both members of \eqref{identif} yields
\begin{equation*}
\frac{1}{(dk)!}\vert f^{(dk)}(0)\vert\leq C^{k+1} N_{2k}
\end{equation*} for some suitable constant $C\geq 0$. It is known that one may choose $f$ such that $\vert f^{(j)}(0)\vert \geq j!M_j$ for any $j\geq 0$, see for instance Theorem 1 in \cite{Th2}. We then get $M_{dk}\leq C^{k+1} N_{2k}$ for any $k$, which, thanks to \eqref{equivseq}, implies $N_k\geq C_1^{k+1}M_k^{d/2}$ for some $C_1>0$. Hence, the class $\mathcal{C}_N$ cannot be smaller than $\mathcal{C}_{M^\sigma}$.  
\end{exam}

\begin{exam}
Set $m=1$ and $P(x,t)=x^2-2tx+t^2+t^4$. We have seen in Section \ref{examp} that the geometric assumptions \ref{geomgamma} are not satisfied, hence our scheme of proof cannot be applied. More specifically, since $\Gamma$ and the real-axis are $2$-regularly separated, the argument of glueing of jets used in the proof of Proposition 
\ref{mainextprop} would only provide a $\bar\partial$-flat extension in the wider class $\mathcal{C}_{M^2}$. However, let $f$ be a germ of class $\mathcal{C}_M$ at the origin in $\mathbb{R}\times\mathbb{R}$, and set $F(x,t)=f(x+t,t)$. Taking Example \ref{above} into account, we may write $F(x,t)=(x^2+t^4)Q(x,t)+xR_1(t)+R_0(t)$ where $Q$, $R_0$ and $R_1$ are of class $\mathcal{C}_M$. Thus, we get 
\begin{equation*}
f(x,t)=F(x-t,t)=P(x,t)q(x,t)+xr_1(t)+r_0(t)
\end{equation*}
with $q(x,t)=Q(x-t,t)$, $r_1(t)=R_1(t)$ and $r_0(t)=R_0(t)-tR_1(t)$. In particular, $q$, $r_0$ and $r_1$ are of class $\mathcal{C}_M$. This \emph{ad hoc} argument shows that Weierstrass division by $P$ holds with the quotient and the remainder belonging to the same class $\mathcal{C}_M$, even though the geometric assumptions \ref{geomgamma} are not satisfied. This suggests that it might be possible to get rid of some of those assumptions and obtain a more general result.
\end{exam}

\end{document}